\newcommand{\comment}[1]{}
\newtheorem{thm}{Theorem}[section]
\newtheorem{prop}[thm]{Proposition}
 \newtheorem{cor}[thm]{Corollary}
\newtheorem{lemma}[thm]{Lemma}
\theoremstyle{remark}
\theoremstyle{definition}
\newtheorem{defn}[thm]{Definition}
\title{Burghelea-Haller analytic torsion for\\ twisted de Rham complexes}
\author{Guangxiang Su \footnote{Chern Institute of Mathematics \& LPMC, Nankai University,
Tianjin 300071, P.R. China. (guangxiangsu@googlemail.com)}}
\date{}
\begin{document}

\maketitle

\begin{abstract}
In this paper, we extend the Burghelea-Haller analytic torsion to
the twisted de Rham complexes. We also compare it with the twisted
refined analytic torsion defined by Huang.
\end{abstract}

\maketitle
\renewcommand{\theequation}{\thesection.\arabic{equation}}
\setcounter{equation}{0}

\section{Introduction}
\setcounter{equation}{0}

Let $E$ be a unitary flat vector bundle on a closed Riemannian
manifold $M$. In \cite{RS}, Ray and Singer defined an analytic
torsion associated to $(M,E)$ and proved that it does not depend on
the Riemannian metric on $M$. Moreover, they conjectured that this
analytic torsion coincides with the classical Reidemeister torsion
defined using a triangulation on $M$ (cf. \cite{Mi}). This
conjecture was later proved in the celebrated papers of Cheeger
\cite{C} and M\"{u}ller \cite{Mu1}. M\"{u}ller generalized this
result in \cite{Mu2} to the case when $E$ is a unimodular flat
vector bundle on $M$. In \cite{BZ1}, inspired by the considerations
of Quillen \cite{Q}, Bismut and Zhang reformulated the above
Cheeger-M\"{u}ller theorem as an equality between the Reidemeister
and Ray-Singer metrics defined on the determinant of cohomology, and
proved an extension of it to the case of general flat vector bundle
over $M$. The method used in \cite{BZ1} is different from those of
Cheeger and M\"{u}ller in that it makes use of a deformation by
Morse functions introduced by Witten \cite{W} on the de Rham
complex.

Braverman and Kappeler \cite{BK1,BK2,BK3} defined the refined
analytic torsion for flat vector bundle over odd dimensional
manifolds, and show that it equals to the Turaev torsion (cf.
\cite{FT,T}) up to a multiplication by a complex number of absolute
value one. Burghelea and Haller \cite{BH1,BH2}, following a
suggestion of M\"{u}ller, defined a generalized analytic torsion
associated to a non-degenerate symmetric bilinear form on a flat
vector bundle over an arbitrary dimensional manifold and make an
explicit conjecture between this generalized analytic torsion and
the Turaev torsion. This conjecture was proved up to sign by
Burghelea-Haller \cite{BH3} and in full generality by Su-Zhang
\cite{SZ}. Cappell and Miller \cite{CM} used non-self-adjoint
Laplace operators to define another complex valued analytic torsion
and used the method in \cite{SZ} to prove an extension of the
Cheeger-M\"{u}ller theorem.

In \cite{MW,MW1}, Mathai and Wu generalized the classical Ray-Singer
analytic torsion to the twisted de Rham complex with an odd degree
closed differential form $H$. Recently, Huang \cite{H} generalized
the refined analytic torsion \cite{BK1,BK2,BK3} to the twisted de
Rham complex, got a duality theorem and compared it with the twisted
Ray-Singer metric which also was defined in \cite{H}.

In this paper, suppose there exists a non-degenerate symmetric
bilinear form on the flat vector bundle $E$, we generalize the
Burghelea-Haller analytic torsion to the twisted de Rham complex.
For the odd dimensional manifold, we also compare it with the
twisted refined analytic torsion and the twisted Ray-Singer metric.

The rest of this paper is organized as follows. In Section 2,
suppose there exists a $\mathbb{Z}_{2}$-graded non-degenerate
symmetric bilinear form on a $\mathbb{Z}_{2}$-graded finite
dimensional complex, we define a symmetric bilinear torsion on it.
In Section 3, we generalize the Burghelea-Haller analytic torsion to
the twisted de Rham complex. In Section 4, when the dimension of the
manifold is odd, we show that the twisted Burghelea-Haller analytic
torsion is independent of the Riemannian metric $g$, the symmetric
bilinear form $b$ and the representative $H$ in the cohomology class
$[H]$. In Section 5, we compare it with the twisted refined analytic
torsion. In Section 6, we briefly discuss the Cappell-Miller
analytic torsion on the twisted de Rham complex.

\section{Symmetric bilinear torsion on a finite dimensional $\mathbb{Z}_{2}$-graded complex}
\setcounter{equation}{0} Let
$$\begin{CD}0\longrightarrow C^{0}@>d_{0}>>C^{1}@>d_{1}>>\cdots@>d_{n-1}>>C^{n}\longrightarrow 0
\end{CD}$$
be a cochain complex of finite dimensional complex vector space. Set
$$C^{\bar{k}}=\bigoplus_{i=k\ {\rm mod}\ 2}C^{i},\ d_{\bar{k}}=\sum_{i=k\ {\rm mod}\ 2}d_{i}\ \ k=0,1.$$
Then we get a $\mathbb{Z}_{2}$-graded cochain complex
\begin{align}
\begin{CD}
\cdots@>d_{\bar{1}}>>C^{\bar{0}}@>d_{\bar{0}}>>C^{\bar{1}}@>d_{\bar{1}}>>C^{\bar{0}}@>d_{\bar{0}}>>\cdots.
\end{CD}
\end{align}
Denote its cohomology by $H^{\bar{k}},\ k=0,1$. Set
$${\rm det}(C^{\bullet},d)={\rm det}C^{\bar{0}}\otimes \left({\rm det}C^{\bar{1}}\right)^{-1},\ \
{\rm det}(H^{\bullet},d)={\rm det}H^{\bar{0}}\otimes\left({\rm
det}H^{\bar{1}}\right)^{-1}.$$ Then we have a canonical isomorphism
between the determinant lines
\begin{align}\label{2.2}
\phi:{\rm det}(C^{\bullet},d)\longrightarrow{\rm
det}(H^{\bullet},d).
\end{align}

Suppose that there is a non-degenerate symmetric bilinear form on
$C^{\bar{k}}$, $k=0,1$. Then it induces a non-degenerate symmetric
bilinear form $b_{{\rm det}H^{\bullet}(C^{\bullet},d)}$ on the
determinant line ${\rm det}(H^{\bullet},d)$ via the isomorphism
(\ref{2.2}).
 Let
$d_{\bar{k}}^{\#}$ be the adjoint of $d_{\bar{k}}$ with respect to
the non-degenerate symmetric bilinear form and define
$\Delta_{b,\bar{k}}=d_{\bar{k}}^{\#}d_{\bar{k}}+d_{\overline{k+1}}d_{\overline{k+1}}^{\#}$.
Let us write $C_{b}^{\bar{k}}(\lambda)$ for the generalized
$\lambda$-eigen space of $\Delta_{b,\bar{k}}$. Then we have a
$b$-orthogonal decomposition
\begin{align}\label{2.3}
C^{\bar{k}}=\bigoplus_{\lambda}C^{\bar{k}}_{b}(\lambda)
\end{align}
and the inclusion $C^{\bar{k}}_{b}(0)\to C^{\bar{k}}$ induces an
isomorphism in cohomology. Particularly, we obtain a canonical
isomorphism
\begin{align}
{\rm det}H^{\bullet}(C^{\bullet}_{b}(0))\cong {\rm
det}H^{\bullet}(C^{\bullet}).
\end{align}
\begin{prop}\label{t2.1}
The following identity holds,
\begin{multline}
b_{{\rm det}H^{\bullet}(C^{\bullet},d)}\\=b_{{\rm
det}H^{\bullet}(C^{\bullet}_{b}(0),d)}\cdot{\rm
det}\left(\left.d_{\bar{0}}^{\#}d_{\bar{0}}\right|_{{C_{b}^{\bar{0},\perp}(0)}\cap{\rm
im}d_{\bar{0}}^{\#}}\right)^{-1}\cdot{\rm
det}\left(\left.d_{\bar{1}}^{\#}d_{\bar{1}}\right|_{{C_{b}^{\bar{1},\perp}(0)}\cap{\rm
im}d_{\bar{1}}^{\#}}\right),
\end{multline}
where $C^{\bar{k},\perp}_{b}(0)=\oplus_{\lambda\neq
0}C^{\bar{k}}_{b}(\lambda)$, $k=0,1$.
\end{prop}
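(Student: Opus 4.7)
The plan is to exploit the $b$-orthogonal decomposition (\ref{2.3}) and factor everything through it. Since $d_{\bar k}$ and $d_{\bar k}^{\#}$ commute with $\Delta_{b,\bar k}$, (\ref{2.3}) is a direct sum of subcomplexes, and because generalized eigenspaces of the $b$-self-adjoint operator $\Delta_{b,\bar k}$ for distinct eigenvalues are $b$-orthogonal, the splitting is $b$-orthogonal. Accordingly
\[
\det(C^{\bullet},d)\ \cong\ \det(C_{b}^{\bullet}(0),d)\otimes\det(C_{b}^{\bullet,\perp}(0),d),
\]
with the bilinear form on the left being the tensor product of the bilinear forms on the two factors. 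As the inclusion $C_{b}^{\bullet}(0)\hookrightarrow C^{\bullet}$ is a quasi-isomorphism, the complement $C_{b}^{\bullet,\perp}(0)$ is acyclic, so the canonical isomorphism $\phi$ of (\ref{2.2}) factors through an acyclic-torsion isomorphism $\phi_{\perp}\colon\det(C_{b}^{\bullet,\perp}(0),d)\xrightarrow{\sim}\mathbb{C}$, and the problem reduces to computing the single scalar $(\phi_{\perp})_{\ast}\,b_{\det(C_{b}^{\bullet,\perp}(0),d)}\in\mathbb{C}$.

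To evaluate that scalar, set $A^{\bar k}:=C_{b}^{\bar k,\perp}(0)\cap\ker d_{\bar k}$ and $B^{\bar k}:=C_{b}^{\bar k,\perp}(0)\cap{\rm im}\,d_{\bar k}^{\#}$, and establish a $b$-orthogonal Hodge-type decomposition $C_{b}^{\bar k,\perp}(0)=A^{\bar k}\oplus B^{\bar k}$. The sum exhausts $C_{b}^{\bar k,\perp}(0)$ via $\alpha=d_{\bar k}^{\#}(d_{\bar k}\Delta^{-1}\alpha)+d_{\overline{k+1}}(d_{\overline{k+1}}^{\#}\Delta^{-1}\alpha)$, valid where $\Delta_{b,\bar k}$ is invertible; $b$-orthogonality follows from $b(\alpha,d_{\bar k}^{\#}\beta)=b(d_{\bar k}\alpha,\beta)=0$ whenever $\alpha\in\ker d_{\bar k}$; and $A^{\bar k}\cap B^{\bar k}=0$ because any $\gamma=d_{\bar k}^{\#}\delta\in\ker d_{\bar k}$ satisfies $\Delta_{b,\bar k}\gamma=d_{\bar k}^{\#}d_{\bar k}\gamma+d_{\overline{k+1}}(d_{\overline{k+1}}^{\#}d_{\bar k}^{\#})\delta=0$, forcing $\gamma=0$. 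Acyclicity of $C_{b}^{\bullet,\perp}(0)$ then makes $d_{\bar 0}\colon B^{\bar 0}\xrightarrow{\sim}A^{\bar 1}$ and $d_{\bar 1}\colon B^{\bar 1}\xrightarrow{\sim}A^{\bar 0}$ isomorphisms.

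Now pick bases $\{e_{j}\}$ of $B^{\bar 0}$ and $\{e'_{j}\}$ of $B^{\bar 1}$, and set $f_{j}:=d_{\bar 0}e_{j}\in A^{\bar 1}$, $f'_{j}:=d_{\bar 1}e'_{j}\in A^{\bar 0}$. By construction the element $(f'_{1}\wedge\cdots\wedge e_{1}\wedge\cdots)\otimes(f_{1}\wedge\cdots\wedge e'_{1}\wedge\cdots)^{-1}$ of $\det(C_{b}^{\bullet,\perp}(0),d)$ maps to $\pm 1$ under $\phi_{\perp}$. Its $b$-Gram determinant is block diagonal by $b$-orthogonality of $A^{\bar k}$ and $B^{\bar k}$, and the identity $b(d_{\bar k}u,d_{\bar k}v)=b(u,d_{\bar k}^{\#}d_{\bar k}v)$ turns each $A$-block into $\det(d_{\bar k}^{\#}d_{\bar k}|_{B^{\bar k}})$ times the Gram of the $e_{j}$ (resp.\ $e'_{j}$); the Gram factors cancel between numerator and denominator, leaving exactly $\det(d_{\bar 1}^{\#}d_{\bar 1}|_{B^{\bar 1}})/\det(d_{\bar 0}^{\#}d_{\bar 0}|_{B^{\bar 0}})$, which is the asserted correction factor.

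The main obstacle is the absence of positivity: classical Hodge theory uses $\ker\Delta=\ker d\cap\ker d^{\#}$, which can fail in the bilinear setting because $b$ is only non-degenerate and generalized eigenspaces replace honest eigenspaces. The algebraic substitute above relies only on invertibility of $\Delta_{b,\bar k}$ on $C_{b}^{\bar k,\perp}(0)$ and on non-degeneracy of $b$ on each generalized eigenspace, which together are enough to push the Hodge-type decomposition through and reduce the remaining bookkeeping to routine linear algebra.
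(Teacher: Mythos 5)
Your proof is correct and follows essentially the same route as the paper's: split off the generalized zero-eigenspace via the $b$-orthogonal decomposition (\ref{2.3}), observe that the complement is acyclic, decompose it as $\mathrm{im}\,d\oplus\mathrm{im}\,d^{\#}$, and read off the torsion as a ratio of determinants. The only difference is that you unpack explicitly the two ingredients the paper simply cites from Burghelea--Haller (\cite[Lemma 3.3]{BH2} for the Hodge-type decomposition, \cite[Example 3.2]{BH2} for the Gram-determinant evaluation on a two-term acyclic complex), and you verify that these carry over verbatim to the $\mathbb{Z}_2$-graded setting.
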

\begin{proof}
The proof is the same as \cite[Lemma 3.3]{BH2}. Suppose
$(C_{1}^{\bullet},b_{1})$ and $(C_{2}^{\bullet},b_{2})$ are
finite-dimensional $\mathbb{Z}_{2}$-graded complexes equipped with
$\mathbb{Z}_{2}$-graded non-degenerate symmetric bilinear forms.
Clearly, $H^{\bullet}(C_{1}^{\bullet}\oplus
C_{2}^{\bullet})=H^{\bullet}(C_{1}^{\bullet})\oplus
H^{\bullet}(C_{2}^{\bullet})$ and we obtain a canonical isomorphism
of determinant lines
$${\rm det}H^{\bullet}(C_{1}^{\bullet}\oplus C_{2}^{\bullet})={\rm det}H^{\bullet}(C_{1}^{\bullet})
\otimes {\rm det}H^{\bullet}(C_{2}^{\bullet}).$$ Then we have
$$b_{{\rm det}H^{\bullet}(C_{1}^{\bullet}\oplus C_{2}^{\bullet})}=b_{{\rm det}H^{\bullet}(C_{1}^{\bullet})}
\otimes b_{{\rm det}H^{\bullet}(C_{2}^{\bullet})}.$$ In view of the
$b$-orthogonal decomposition (\ref{2.3}) we may therefore without
loss of generality assume ${\rm ker}\Delta_{b,\bar{k}}=0,\ k=0,1$.
Then by \cite[Lemma 3.3]{BH2} we have
$$C^{\bar{k}}={\rm im}d_{\overline{k+1}}\oplus {\rm im}d_{\bar{k}}^{\#}.$$
This decomposition is $b$-orthogonal and invariant under
$\Delta_{b}$. Then we have the following two exact complexes
$$
\begin{CD}
0\longrightarrow C^{\bar{0}}\cap{\rm
im}d_{\bar{0}}^{\#}@>d_{\bar{0}}>>C^{\bar{1}}\cap{\rm
im}d_{\bar{0}}\longrightarrow0 \end{CD}$$ and
$$
\begin{CD}
0\longrightarrow C^{\bar{1}}\cap{\rm
im}d_{\bar{1}}^{\#}@>d_{\bar{1}}>>C^{\bar{0}}\cap{\rm
im}d_{\bar{1}}\longrightarrow0 \end{CD}.$$ Then from \cite[Example
3.2]{BH2}, we get the proposition.
\end{proof}

\section{Symmetric bilinear torsion on the twisted de Rham complexes}
\setcounter{equation}{0}

In this section, we suppose that there is a fiber-wise
non-degenerate symmetric bilinear form on $E$. Then we define a
symmetric bilinear torsion on the determinant line of the twisted de
Rham complex.

\subsection{Twisted de Rham complexes}
In this subsection, we review the twisted de Rham complexes from
\cite{MW}.

Let $M$ be a closed Riemannian manifold and $E\to M$ be a complex
flat vector bundle with flat connection $\nabla$. Let $H$ be an
odd-degree closed differential form on $M$. We set
$\Omega^{\bar{0}}=\Omega^{\rm even}(M,E)$,
$\Omega^{\bar{1}}=\Omega^{\rm odd}(M,E)$ and
$\nabla^{H}=\nabla+H\wedge.$ We define the twisted de Rham
cohomology groups as
$$H^{\bar{k}}(M,E,H)={{{\rm ker}\left(\nabla^{H}:\Omega^{\bar{k}}(M,E)\to \Omega^{\overline{k+1}}(M,E)\right)}
\over{{\rm im}\left(\nabla^{H}:\Omega^{\overline{k+1}}(M,E)\to
\Omega^{\bar{k}}(M,E)\right)}},\ \ k=0,1.$$ Suppose $H$ is replaced
by $H'=H-dB$ for some $B\in\Omega^{\bar{0}}(M)$, then there is an
isomorphism
$\varepsilon_{B}=e^{B}\wedge\cdot:\Omega^{\bullet}(M,E)\to
\Omega^{\bullet}(M,E)$ satisfying
$$\varepsilon_{B}\circ\nabla^{H}=\nabla^{H'}\circ\varepsilon_{B}.$$
Therefore $\varepsilon_{B}$ induces an isomorphism
$$\varepsilon_{B}:H^{\bullet}(M,E,H)\to H^{\bullet}(M,E,H')$$
on the twisted de Rham cohomology.

\subsection{The construction of the symmetric bilinear torsion}
Suppose that there exists a non-degenerate symmetric bilinear form
on $E$. To simplify notation, let
$C^{\bar{k}}=\Omega^{\bar{k}}(X,E)$ and let
$d_{\bar{k}}=d_{\bar{k}}^{E,H}$ be the operator $\nabla^{H}$ acting
on $C^{\bar{k}}$ ($k=0,1$). Then
$d_{\bar{1}}d_{\bar{0}}=d_{\bar{0}}d_{\bar{1}}=0$ and we have a
complex
\begin{align}
\begin{CD}
\cdots@>d_{\bar{1}}>>C^{\bar{0}}@>d_{\bar{0}}>>C^{\bar{1}}@>d_{\bar{1}}>>C^{\bar{0}}@>d_{\bar{0}}>>\cdots.
\end{CD}
\end{align}
The metric $g^{M}$ and the symmetric bilinear form $b$ determine
together a symmetric bilinear form on $\Omega^{\bullet}(M,E)$ such
that if $u=\alpha f$, $v=\beta g\in\Omega^{\bullet}(M,E)$ such that
$\alpha,\beta\in \Omega^{\bullet}(M)$, $f,g\in\Gamma(E)$, then
\begin{align}\label{1}
\beta_{g,b}(u,v)=\int_{M}(\alpha\wedge*\beta)b(f,g),
\end{align}
where $*$ is the Hodge star operator. Denote by $d_{\bar{k}}^{\#}$
the adjoint of $d_{\bar{k}}$ with respect to the non-degenerate
symmetric bilinear form (\ref{1}). Then the Laplacians
$$\Delta_{b,\bar{k}}=d_{\bar{k}}^{\#}d_{\bar{k}}+d_{\overline{k+1}}d_{\overline{k+1}}^{\#},\ \ k=0,1$$
If $\lambda$ is in the spectrum of $\Delta_{b,\bar{k}}$, then the
image of the associated spectral projection is finite dimensional
and contains smooth forms only. We refer to this image as the
(generalized) $\lambda$-eigen space of $\Delta_{b,\bar{k}}$ and
denote it by $\Omega^{\bar{k}}_{\{\lambda\}}(M,E)$ and there exists
$N_{\lambda}\in\mathbb{N}$ such that
$$\left(\Delta_{b,\bar{k}}-\lambda\right)^{N_{\lambda}}|_{\Omega^{\bar{k}}_{\{\lambda\}}(M,E)}=0.$$
Then for different generalized eigenvalues $\lambda,\mu$, the spaces
$\Omega^{\bar{k}}_{\{\lambda\}}(M,E)$ and
$\Omega^{\bar{k}}_{\{\mu\}}(M,E)$ are $\beta_{g,b}$-orthogonal.

For any $a\geq 0$, set
$$\Omega^{\bar{k}}_{[0,a]}(M,E)=\bigoplus_{0\leq |\lambda|\leq a}\Omega^{\bar{k}}_{\{\lambda\}}(M,E).$$
Then $\Omega^{\bar{k}}_{[0,a]}(M,E)$ is of finite dimensional and
one gets a non-degenerate symmetric bilinear form $b_{{\rm
det}H^{\bullet}(\Omega^{\bullet}_{[0,a]},d)}$ on ${\rm
det}H^{\bullet}(\Omega^{\bullet}_{[0,a]},d)$. Let
$\Omega^{\bar{k}}_{(a,+\infty)}(M,E)$ denote the
$\beta_{g,b}$-orthogonal complement to
$\Omega^{\bar{k}}_{[0,a]}(M,E)$.

For the subcomplexes
$(\Omega^{\overline{k+1}}_{(a,+\infty)}(M,E),d)$, since the
operators $d_{\bar{k}}d_{\bar{k}}^{\#}$ and
$\Delta_{b,\overline{k+1}}$ are equal and invertible on ${\rm
im}(d_{\bar{k}})\cap\Omega^{\overline{k+1}}_{(a,+\infty)}(M,E)$, we
have
\begin{align}
P_{\bar{k}}:=d_{\bar{k}}^{\#}\left(d_{\bar{k}}d_{\bar{k}}^{\#}\right)^{-1}d_{\bar{k}}
=d_{\bar{k}}^{\#}\left(\Delta_{b,\overline{k+1}}\right)^{-1}d_{\bar{k}}
\end{align}
is a pseudodifferntial operator of order $0$ and satisfies
$$P_{\bar{k}}^{2}=P_{\bar{k}}.$$
 By definition we have
\begin{multline}
\zeta\left(s,d_{\bar{k}}^{\#}d_{\bar{k}}|_{{\rm
im}d_{\bar{k}}^{\#}\cap\Omega^{\bar{k}}_{(a,+\infty)}(M,E)}\right)={\rm
Tr}\left(\Delta_{b,\bar{k}}^{-s}P_{\bar{k}}|_{\Omega^{\bar{k}}_{(a,+\infty)}(M,E)}\right)\\
={\rm
Tr}\left(P_{\bar{k}}\Delta_{b,\bar{k}}^{-s}|_{\Omega^{\bar{k}}_{(a,+\infty)}(M,E)}\right).
\end{multline}
Then $\zeta\left(s,d_{\bar{k}}^{\#}d_{\bar{k}}|_{{\rm
im}d_{\bar{k}}^{\#}\cap\Omega^{\bar{k}}_{(a,+\infty)}(M,E)}\right)$
has a meromorphic extension to the whole complex plane and, by
\cite[Section 7]{Wo}, it is regular at $0$. Then by \cite{GS,Wo}, we
have the following result which is an analogue of \cite[Theorem
2.1]{MW}.
\begin{thm}
For $k=0,1$, $\zeta\left(s,d_{\bar{k}}^{\#}d_{\bar{k}}|_{{\rm
im}d_{\bar{k}}^{\#}\cap\Omega^{\bullet}_{(a,+\infty)}(M,E)}\right)$
is holomorphic in the half plane for ${\rm Re}(s)>n/2$ and extends
meromorphically to $\mathbb{C}$ with possible poles at
$\{{{n-l}\over 2},l=0,1,2,\dots\}$ only, and is holomorphic at
$s=0$.
\end{thm}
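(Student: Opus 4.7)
The plan is to recognize the given $\zeta$-function as an instance of $\mathrm{Tr}(A\Delta^{-s})$ for a classical $\Psi$DO $A$ of order $0$ and an elliptic second-order $\Psi$DO $\Delta$, and then to invoke the Seeley--Wodzicki theory of complex powers. To set this up, I would first verify the analytic background: $\Delta_{b,\bar k}$ is a second-order differential operator whose principal symbol equals $|\xi|^2_g\cdot\mathrm{Id}$, because the passage from the Hermitian to the symmetric bilinear pairing alters the formal adjoint of $\nabla^H$ only by zeroth-order terms. Thus $\Delta_{b,\bar k}$ is elliptic with the same leading symbol as the usual Hodge Laplacian. Restricted to $\Omega^{\bar k}_{(a,+\infty)}(M,E)$, where it is invertible and its spectrum lies away from $0$, Seeley's construction with a ray-shaped spectral cut produces complex powers $\Delta_{b,\bar k}^{-s}$. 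The operator $P_{\bar k}=d_{\bar k}^{\#}(\Delta_{b,\overline{k+1}})^{-1}d_{\bar k}$ is then, by the pseudodifferential composition calculus, a classical $\Psi$DO of order $0$ (in fact an idempotent projecting onto $\mathrm{im}\,d_{\bar k}^{\#}$).

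For the half-plane of absolute convergence, I would note that when $\mathrm{Re}(s)>n/2$ the operator $\Delta_{b,\bar k}^{-s}$ is of order strictly less than $-n$, hence trace class on the closed $n$-manifold $M$; composition with the bounded $P_{\bar k}$ stays trace class, so $\mathrm{Tr}(P_{\bar k}\Delta_{b,\bar k}^{-s}|_{\Omega^{\bar k}_{(a,+\infty)}})$ converges absolutely and is holomorphic there. For the meromorphic continuation and pole structure I would apply the Seeley theorem in the form extended by Guillemin--Schaeffer \cite{GS} and Wodzicki \cite{Wo}: for a classical $\Psi$DO $A$ of order $0$ and an elliptic positive $\Psi$DO $\Delta$ of order $2$ on a closed $n$-manifold, $\mathrm{Tr}(A\Delta^{-s})$ extends meromorphically to $\mathbb{C}$ with at most simple poles at $s=(n-l)/2$, $l=0,1,2,\dots$, whose residues are local expressions computed from the symbol expansion of $A$ and of the resolvent of $\Delta$. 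Specializing to $A=P_{\bar k}$ and $\Delta=\Delta_{b,\bar k}|_{\Omega^{\bar k}_{(a,+\infty)}}$ yields the claimed location of poles.

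The main obstacle is the last assertion, regularity at $s=0$. Up to a universal normalization constant, the residue at $s=0$ equals the Wodzicki noncommutative residue of $P_{\bar k}$. I would exploit the projection identity $P_{\bar k}^2=P_{\bar k}$ and the explicit formula $P_{\bar k}=d_{\bar k}^{\#}(\Delta_{b,\overline{k+1}})^{-1}d_{\bar k}$ to express the $\zeta$-trace, modulo an entire remainder, in terms of $\mathrm{Tr}\bigl(\Delta_{b,\overline{k+1}}^{-s}|_{\mathrm{im}\,d_{\bar k}}\bigr)$; regularity at $0$ of such expressions is the classical statement for elliptic Laplacians arising from an elliptic complex. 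Since the principal symbol of $\Delta_{b,\bar k}$ coincides with that of the standard Hodge Laplacian, the argument of \cite[Section 7]{Wo}, and the parallel computation in \cite[Thm 2.1]{MW}, transports unchanged to the present symmetric bilinear setting, delivering holomorphicity at $s=0$.
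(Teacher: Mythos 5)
Your proposal is correct and follows essentially the same route as the paper, which gives no independent proof beyond citing Grubb--Seeley, Wodzicki (Section 7), and the analogy with Mathai--Wu's Theorem 2.1 for the twisted Ray--Singer torsion; you simply spell out the details (principal symbol of $\Delta_{b,\bar k}$ unchanged from the Hodge Laplacian, $P_{\bar k}$ a classical order-$0$ $\Psi$DO, trace-class for $\mathrm{Re}(s)>n/2$, pole structure of $\mathrm{Tr}(A\Delta^{-s})$, and vanishing of the Wodzicki residue of a pseudodifferential projection for regularity at $s=0$). One small correction: the reference \cite{GS} is Grubb--Seeley, not Guillemin--Schaeffer.
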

Then for $k=0,1$ and any $a\geq 0$, the following regularized zeta
determinant is well defined:
\begin{align}\label{3.4}
{\rm
det}'\left(d_{\bar{k}}^{\#}d_{\bar{k}}|_{\Omega^{\bar{k}}_{(a,+\infty)}(M,E)}\right):=
\exp\left(-\zeta'\left(0,d_{\bar{k}}^{\#}d_{\bar{k}}|_{{\rm
im}d_{\bar{k}}^{\#}\cap\Omega^{\bar{k}}_{(a,+\infty)}(M,E)}\right)\right).
\end{align}
\begin{prop}
The symmetric bilinear form on ${\rm
det}H^{\bullet}(\Omega^{\bullet}(M,E,H),d)$ defined by
\begin{align}\label{3.3}
b_{{\rm det}H^{\bullet}(\Omega^{\bullet}_{[0,a]}(M,E),d)}\cdot{\rm
det}'\left(d_{\bar{0}}^{\#}d_{\bar{0}}|_{\Omega^{\bar{0}}_{(a,+\infty)}(M,E)}\right)^{-1}\cdot\left({\rm
det}'\left(d_{\bar{1}}^{\#}d_{\bar{1}}|_{\Omega^{\bar{1}}_{(a,+\infty)}(M,E)}\right)\right)
\end{align}
is independent of the choice of $a\geq 0$.
\end{prop}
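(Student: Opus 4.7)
The plan is to show invariance under any change $a \leq a'$ (both $\geq 0$) and then conclude by continuity of both sides as $a$ crosses spectral values. Fix $0 \leq a \leq a'$. Because the generalized eigenspaces of $\Delta_{b,\bar{k}}$ are $\beta_{g,b}$-orthogonal, $d$- and $d^{\#}$-invariant, one has an orthogonal decomposition of subcomplexes
\begin{align*}
\Omega^{\bar{k}}_{(a,+\infty)}(M,E)=\Omega^{\bar{k}}_{(a,a']}(M,E)\oplus\Omega^{\bar{k}}_{(a',+\infty)}(M,E),
\end{align*}
where $\Omega^{\bullet}_{(a,a']}(M,E)$ is finite-dimensional and acyclic (all its Laplacian eigenvalues are non-zero). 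Similarly $\Omega^{\bullet}_{[0,a']}(M,E)=\Omega^{\bullet}_{[0,a]}(M,E)\oplus\Omega^{\bullet}_{(a,a']}(M,E)$ as $\beta_{g,b}$-orthogonal subcomplexes.

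First I would record multiplicativity of the regularized determinant across the above orthogonal splitting. Since $\Omega^{\bar{k}}_{(a,a']}(M,E)$ is finite-dimensional and $\beta_{g,b}$-orthogonal to $\Omega^{\bar{k}}_{(a',+\infty)}(M,E)$, the corresponding zeta functions add; evaluating derivatives at $s=0$ gives
\begin{align*}
{\rm det}'\!\left(d_{\bar{k}}^{\#}d_{\bar{k}}|_{\Omega^{\bar{k}}_{(a,+\infty)}}\right)
={\rm det}\!\left(d_{\bar{k}}^{\#}d_{\bar{k}}|_{{\rm im}\,d_{\bar{k}}^{\#}\cap\Omega^{\bar{k}}_{(a,a']}}\right)
\cdot{\rm det}'\!\left(d_{\bar{k}}^{\#}d_{\bar{k}}|_{\Omega^{\bar{k}}_{(a',+\infty)}}\right),
\end{align*}
for $k=0,1$. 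On the cohomology side, applying Proposition \ref{t2.1} to the finite-dimensional complex $\Omega^{\bullet}_{[0,a']}(M,E)$ with the acyclic summand $\Omega^{\bullet}_{(a,a']}(M,E)$ playing the role of $C^{\bullet,\perp}_{b}(0)$ (its Laplacian is invertible, so it coincides with the non-zero eigenspaces inside $\Omega^{\bullet}_{[0,a']}$), yields
\begin{align*}
b_{{\rm det}H^{\bullet}(\Omega^{\bullet}_{[0,a']},d)}
=b_{{\rm det}H^{\bullet}(\Omega^{\bullet}_{[0,a]},d)}\cdot
\frac{{\rm det}\!\left(d_{\bar{1}}^{\#}d_{\bar{1}}|_{{\rm im}\,d_{\bar{1}}^{\#}\cap\Omega^{\bar{1}}_{(a,a']}}\right)}{{\rm det}\!\left(d_{\bar{0}}^{\#}d_{\bar{0}}|_{{\rm im}\,d_{\bar{0}}^{\#}\cap\Omega^{\bar{0}}_{(a,a']}}\right)}.
\end{align*}

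Substituting these two identities into the expression (\ref{3.3}) for the parameter $a$, the finite-dimensional determinants on the $\Omega^{\bullet}_{(a,a']}$ pieces cancel exactly between the $k=0$ and $k=1$ factors and the cohomological correction, leaving the same expression (\ref{3.3}) with $a$ replaced by $a'$. Hence the quantity is constant in $a$.

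The main obstacle is the multiplicativity of the regularized determinant across the orthogonal decomposition: one must justify $\zeta(s,\cdot)=\zeta_{1}(s,\cdot)+\zeta_{2}(s,\cdot)$ when one summand operator is pseudodifferential on an infinite-dimensional space and the other is a genuine finite-rank operator. This is standard because the zeta function of the finite-rank part is entire and equals $\sum\lambda^{-s}$, so the split of $\zeta'(0)$ into $\log$ of a finite-dimensional determinant plus the infinite-dimensional $\zeta'(0)$ is immediate; once this is settled the rest of the argument is purely bookkeeping using Proposition \ref{t2.1}.
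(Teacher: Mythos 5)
Your argument follows the paper's proof essentially verbatim: the same $\beta_{g,b}$-orthogonal splitting of both the finite-dimensional block $\Omega^\bullet_{[0,a']}=\Omega^\bullet_{[0,a]}\oplus\Omega^\bullet_{(a,a']}$ and the large-eigenvalue block, the same multiplicativity of the zeta-regularized determinant across these splittings, and the same invocation of Proposition \ref{t2.1} to track the bilinear form on the determinant line, with the finite-dimensional factors cancelling. (One minor slip: the parenthetical claiming $\Omega^{\bullet}_{(a,a']}$ ``coincides with the non-zero eigenspaces inside $\Omega^{\bullet}_{[0,a']}$'' is only literally true when $a=0$; what you actually use, and what suffices, is that $\Omega^{\bullet}_{(a,a']}$ is a $\beta_{g,b}$-orthogonal acyclic subcomplex with invertible Laplacian -- the same mild extrapolation of Proposition \ref{t2.1} that the paper itself makes. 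The concluding appeal to continuity is unnecessary, since the identity is established directly for all $a\le a'$.)
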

\begin{proof}
Let $0\leq a<c<\infty$. We have
\begin{align}\label{3.5}
\left(\Omega^{\bar{k}}_{[0,c]}(M,E),d_{\bar{k}}\right)=\left(\Omega^{\bar{k}}_{[0,a]}(M,E),d_{\bar{k}}\right)
\bigoplus \left(\Omega^{\bar{k}}_{(a,c]}(M,E),d_{\bar{k}}\right)
\end{align}
and
$$\left(\Omega^{\bar{k}}_{(a,+\infty)}(M,E),d_{\bar{k}}\right)=\left(\Omega^{\bar{k}}_{(a,c]}(M,E),d_{\bar{k}}\right)
\bigoplus
\left(\Omega^{\bar{k}}_{(c,+\infty)}(M,E),d_{\bar{k}}\right).$$ Then
by definition of the determinant, we get
\begin{multline}
{\rm
det}'\left(d_{\bar{k}}^{\#}d_{\bar{k}}|_{\Omega^{\bar{1}}_{(a,+\infty)}(M,E)}\right)\\
={\rm
det}'\left(d_{\bar{k}}^{\#}d_{\bar{k}}|_{\Omega^{\bar{1}}_{(a,c]}(M,E)}\right)\cdot{\rm
det}'\left(d_{\bar{k}}^{\#}d_{\bar{k}}|_{\Omega^{\bar{1}}_{(c,+\infty)}(M,E)}\right).
\end{multline}
Applying Proposition \ref{t2.1} to (\ref{3.5}), we get
$$b_{{\rm det}H^{\bullet}(\Omega^{\bullet}_{[0,c]})}=
b_{{\rm det}H^{\bullet}(\Omega^{\bullet}_{[0,a]})}\cdot{\rm
det}'\left(d_{\bar{0}}^{\#}d_{\bar{0}}|_{\Omega^{\bar{0}}_{(a,c]}(M,E)}\right)^{-1}\cdot\left({\rm
det}'\left(d_{\bar{1}}^{\#}d_{\bar{1}}|_{\Omega^{\bar{1}}_{(a,c]}(M,E)}\right)\right).$$
Then we get the proposition.
\end{proof}
\begin{defn}
The symmetric bilinear form defined by (\ref{3.3}) is called the
Ray-Singer symmetric bilinear torsion on ${\rm
det}H^{\bullet}(\Omega^{\bullet}(M,E,H),d)$ and is denoted by
$\tau_{b,\nabla,H}$.
\end{defn}

\section{Symmetric bilinear torsion under metric and flux deformations}
\setcounter{equation}{0} In this section, we will use the methods in
\cite{MW} to study the dependence of the torsion on the metric $g$,
the symmetric bilinear form $b$ and the flux $H$.

\subsection{Variation of the torsion with respect to the metric and symmetric bilinear form}
We assume that $M$ is a closed compact oriented manifold of odd
dimension. Suppose the pair $(g_{u},b_{u})$ is deformed smoothly
along a one-parameter family with parameter $u\in\mathbb{R}$. Let
$Q_{\bar{k}}$ be the spectral projection onto
$\Omega^{\bar{k}}_{[0,a]}(M,E)$ and $\Pi_{\bar{k}}=1-Q_{\bar{k}}$ be
the spectral projection onto $\Omega^{\bar{k}}_{(a,+\infty)}(M,E)$.
Let
$$\alpha=*_{u}^{-1}{{\partial*_{u}}\over{\partial u}}+b_{u}^{-1}{{\partial b_{u}}\over{\partial u}}.$$
\begin{lemma}\label{t4.1}
Under the above assumptions,
\begin{multline}
{\partial\over{\partial u}}\log\left[{\rm
det}'\left(d_{\bar{0}}^{\#}d_{\bar{0}}|_{\Omega^{\bar{0}}_{(a,+\infty)}(M,E)}\right)^{-1}\cdot\left({\rm
det}'\left(d_{\bar{1}}^{\#}d_{\bar{1}}|_{\Omega^{\bar{1}}_{(a,+\infty)}(M,E)}\right)\right)\right]\\
=-\sum_{k=0,1}(-1)^{k}{\rm Tr}(\alpha Q_{\bar{k}}).\\
\end{multline}
\end{lemma}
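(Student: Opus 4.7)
The plan is to imitate the variation argument of Mathai--Wu \cite{MW} for the classical Ray--Singer torsion, replacing their Hermitian inner product by the symmetric bilinear form $\beta_{g_u,b_u}$ of (3.2).

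Step one is to compute the variation of the relevant operators. Differentiating (3.2) directly yields
$$\partial_u \beta_{g_u,b_u}(\phi,\psi)=\beta_{g_u,b_u}(\alpha\phi,\psi)=\beta_{g_u,b_u}(\phi,\alpha\psi),$$
so $\alpha$ is pointwise $\beta_u$-self-adjoint. Since the flat connection $\nabla$ and the form $H$ do not depend on $u$, the operator $d_{\bar k}$ is $u$-independent, and differentiating the adjointness identity $\beta_u(d^\#_{\bar k}\phi,\psi)=\beta_u(\phi,d_{\bar k}\psi)$ gives $\dot d^\#_{\bar k}=[d^\#_{\bar k},\alpha]$. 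A short calculation then produces
$$\dot\Delta_{b,\bar k}=[d^\#_{\bar k},\alpha]\,d_{\bar k}+d^\#_{\bar k}\,[\alpha,d_{\bar k}]+[d_{\overline{k+1}},\alpha]\,d^\#_{\overline{k+1}}+d_{\overline{k+1}}\,[\alpha,d^\#_{\overline{k+1}}].$$

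Step two is the standard variation formula for zeta-regularized determinants: for each $k$,
$$\partial_u\log\det{}'\bigl(d^\#_{\bar k}d_{\bar k}|_{\Omega^{\bar k}_{(a,+\infty)}(M,E)}\bigr)=\mathrm{FP}_{s=0}\,\mathrm{Tr}\bigl(\dot\Delta_{b,\bar k}\,\Delta_{b,\bar k}^{-s-1}\,P_{\bar k}\,\Pi_{\bar k}\bigr),$$
with $P_{\bar k}$ the projector of (3.3). Substituting the commutator form of $\dot\Delta_{b,\bar k}$, using cyclicity of the regularized trace together with the identities $P_{\bar k}d^\#_{\bar k}=d^\#_{\bar k}$ and $d_{\bar k}P_{\bar k}=d_{\bar k}$, and combining with alternating signs in $k$ collapses the sum to
$$\partial_u\log\bigl[\cdots\bigr]=-\sum_{k=0,1}(-1)^k\,\mathrm{FP}_{s=0}\,\mathrm{Tr}\bigl(\alpha\,\Delta_{b,\bar k}^{-s}\,\Pi_{\bar k}\bigr).$$

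Step three is the odd-dimensional parity vanishing. Writing $\Pi_{\bar k}=1-Q_{\bar k}$ splits the above expression into an alternating (in $k$) zeta-regularized ``total'' trace of $\alpha$ and a finite-dimensional contribution $-(-1)^k\mathrm{Tr}(\alpha Q_{\bar k})$. By the local index argument used throughout \cite{MW}, the $\mathbb{Z}_2$-graded alternating sum of the total traces is the integral over the closed odd-dimensional manifold $M$ of a local density that vanishes identically in odd dimensions (the parity obstruction coming from Hodge $*$). What survives is exactly $-\sum_{k}(-1)^k\mathrm{Tr}(\alpha Q_{\bar k})$, as claimed. The main obstacle will be the algebraic bookkeeping of the commutator expansion of $\dot\Delta_{b,\bar k}$ and the cyclic rearrangement of the regularized trace to bring it into a manifestly super-symmetric form amenable to the Mathai--Wu parity argument; once that identification is in place, the finite-rank cutoff contribution involving $Q_{\bar k}$ emerges automatically from $\Pi_{\bar k}=1-Q_{\bar k}$.
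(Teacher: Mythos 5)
Your proposal follows essentially the same route as the paper's proof: vary $d^{\#}_{\bar k}$ through $\alpha$, run it through a Duhamel/Mellin computation to reach $\mathrm{Tr}(\alpha\,e^{-t\Delta_{b,\bar k}}\Pi_{\bar k})$, split $\Pi_{\bar k}=1-Q_{\bar k}$, and use the fact that on an odd-dimensional closed manifold the small-time expansion of $\mathrm{Tr}(\alpha\,e^{-t\Delta_{b,\bar k}})$ has no constant term, so only the finite-rank piece $-\sum_{k}(-1)^{k}\mathrm{Tr}(\alpha Q_{\bar k})$ survives at $s=0$. The paper phrases this via the function $f(s,u)=\Gamma(s)\sum_k(-1)^k\zeta_k(s)$ and integrates by parts, while you phrase it via $\mathrm{FP}_{s=0}\mathrm{Tr}(\dot\Delta\,\Delta^{-s-1}P\Pi)$; these are the same computation, organized slightly differently.

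There is, however, a computational slip in Step one that would derail Step two as written. Since $d_{\bar k}=\nabla^{H}$ is completely independent of $g_u$ and $b_u$, you correctly note $\dot d_{\bar k}=0$, but then your displayed formula for $\dot\Delta_{b,\bar k}$ does not match this. The correct variation is
\begin{align*}
\dot\Delta_{b,\bar k}\;=\;[d^{\#}_{\bar k},\alpha]\,d_{\bar k}\;+\;d_{\overline{k+1}}\,[d^{\#}_{\overline{k+1}},\alpha],
\end{align*}
whereas your expression contains the two extraneous terms $d^{\#}_{\bar k}[\alpha,d_{\bar k}]$ and $[d_{\overline{k+1}},\alpha]d^{\#}_{\overline{k+1}}$ (these would arise only if $\dot d_{\bar k}\neq 0$), and your fourth term $d_{\overline{k+1}}[\alpha,d^{\#}_{\overline{k+1}}]$ has the wrong sign. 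Two further cautions on Step two: ``cyclicity of the regularized trace'' must be invoked at the honest trace-class (heat-kernel) level, as the paper does in its Duhamel manipulation, since the zeta-regularized trace is not cyclic in general; and you should explicitly dispose of the contribution from $\partial_u P_{\bar k}$ using $\dot P_{\bar k}=\dot P_{\bar k}P_{\bar k}$ and $P_{\bar k}\dot P_{\bar k}=0$, a point the paper handles and your sketch elides.
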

\begin{proof}
While $d_{\bar{k}}$ is independent of $u$, we have
$${{\partial d_{\bar{k}}^{\#}}\over{\partial u}}=-\left[\alpha,d_{\bar{k}}^{\#}\right].$$
Using $P_{\bar{k}}d_{\bar{k}}^{\#}=d_{\bar{k}}^{\#}$,
$d_{\bar{k}}P_{\bar{k}}=d_{\bar{k}}$ and
$P_{\bar{k}}^{2}=P_{\bar{k}}$, we get
$d_{\bar{k}}^{\#}d_{\bar{k}}P_{\bar{k}}=P_{\bar{k}}d_{\bar{k}}^{\#}d_{\bar{k}}=d_{\bar{k}}^{\#}d_{\bar{k}}$
and
$${{\partial P_{\bar{k}}}\over{\partial u}}={{\partial P_{\bar{k}}}\over{\partial u}}P_{\bar{k}},\ \
P_{\bar{k}}{{\partial P_{\bar{k}}}\over{\partial u}}=0.$$ Following
the $\mathbb{Z}$-graded case, we set
\begin{multline}
f(s,u)=\sum_{k=0,1}(-1)^{k}\int_{0}^{+\infty}t^{s-1}{\rm
Tr}\left(e^{-td_{\bar{k}}^{\#}d_{\bar{k}}}P_{\bar{k}}|_{\Omega^{\bar{k}}_{(a,+\infty)}(M,E)}\right)dt\\
=\Gamma(s)\sum_{k=0,1}(-1)^{k}\zeta\left(s,d_{\bar{k}}^{\#}d_{\bar{k}}|_{\Omega^{\bar{k}}_{(a,+\infty)}(M,E)}\right).\\
\end{multline}
Using the above identities on $P_{\bar{k}}$, the trace property and
by an application of Duhamel's principal, we get
\begin{multline}
{{\partial f}\over{\partial
u}}\\=\sum_{k=0,1}(-1)^{k}\int_{0}^{+\infty}t^{s-1}{\rm
Tr}\left(t\left[\alpha,d_{\bar{k}}^{\#}\right]d_{\bar{k}}e^{-td_{\bar{k}}^{\#}d_{\bar{k}}}\Pi_{\bar{k}}+
e^{-td_{\bar{k}}^{\#}d_{\bar{k}}}{{\partial
P_{\bar{k}}}\over{\partial u}}P_{\bar{k}}\Pi_{\bar{k}}\right)dt\\
=\sum_{k=0,1}(-1)^{k}\int_{0}^{+\infty}t^{s-1}{\rm
Tr}\left(t\alpha\left[d_{\bar{k}}^{\#},d_{\bar{k}}e^{-td_{\bar{k}}^{\#}d_{\bar{k}}}\right]\Pi_{\bar{k}}
+P_{\bar{k}}e^{-td_{\bar{k}}^{\#}d_{\bar{k}}}{{\partial P_{\bar{k}}}\over{\partial u}}\Pi_{\bar{k}}\right)dt\\
=\sum_{k=0,1}(-1)^{k}\int_{0}^{+\infty}t^{s-1}{\rm
Tr}\left(t\alpha\left(e^{-td_{\bar{k}}^{\#}d_{\bar{k}}}d_{\bar{k}}^{\#}d_{\bar{k}}-
e^{-td_{\bar{k}}d_{\bar{k}}^{\#}}d_{\bar{k}}d_{\bar{k}}^{\#}\right)\Pi_{\bar{k}}
+e^{-td_{\bar{k}}^{\#}d_{\bar{k}}}P_{\bar{k}}{{\partial
P_{\bar{k}}}\over{\partial
u}}\Pi_{\bar{k}}\right)dt\\=\sum_{k=0,1}(-1)^{k}\int_{0}^{+\infty}t^{s}{\rm
Tr}\left(\alpha e^{-t\Delta_{b,\bar{k}}}
\Delta_{b,\bar{k}}\Pi_{\bar{k}}\right)dt\\
=-\sum_{k=0,1}(-1)^{k}\int_{0}^{+\infty}t^{s}{\partial\over{\partial
t}}{\rm
Tr}\left(\alpha\left(e^{-t\Delta_{b,\bar{k}}}\Pi_{\bar{k}}\right)\right)dt.
\end{multline}
Integrating by parts, we have
\begin{multline}
{{\partial f}\over{\partial
u}}=s\sum_{k=0,1}(-1)^{k}\int_{0}^{+\infty}t^{s-1}{\rm
Tr}\left(\alpha\left(e^{-t\Delta_{b,\bar{k}}}\Pi_{\bar{k}}\right)\right)dt\\
=s\sum_{k=0,1}(-1)^{k}\left(\int_{0}^{1}+\int_{1}^{+\infty}\right)t^{s-1}{\rm
Tr}\left(\alpha e^{-t\Delta_{b,\bar{k}}}(1-Q_{\bar{k}})\right)dt.
\end{multline}
Since $\alpha$ is a smooth tensor and $n$ is odd, the asymptotic
expansion as $t\downarrow 0$ for ${\rm Tr}(\alpha
e^{-t\Delta_{b,\bar{k}}})$ does not contain a constant term.
Therefore
$$\int_{0}^{1}t^{s-1}{\rm Tr}\left(\alpha e^{-t\Delta_{b,\bar{k}}}\right)dt$$
does not have a pole at $s=0$. On the other hand, because of the
exponential decay of ${\rm Tr}(\alpha
e^{-t\Delta_{b,\bar{k}}}\Pi_{\bar{k}})$ for large $t$,
$$\int_{1}^{+\infty}t^{s-1}{\rm Tr}\left(\alpha e^{-t\Delta_{b,\bar{k}}}\Pi_{\bar{k}}\right)$$
is an entire function in $s$. So
\begin{align}\label{4.5}
\left.{{\partial f}\over{\partial
u}}\right|_{s=0}=-s\sum_{k=0,1}(-1)^{k}\int_{0}^{1}\left.t^{s-1}{\rm
Tr}(\alpha Q_{\bar{k}})dt\right|_{s=0} =-\sum_{k=0,1}(-1)^{k}{\rm
Tr}\left(\alpha Q_{\bar{k}}\right)
\end{align}
and hence
\begin{align}\label{4.6}
{\partial\over{\partial
u}}\sum_{k=0,1}(-1)^{k}\zeta\left(0,d_{\bar{k}}^{\#}d_{\bar{k}}|_{\Omega^{\bar{k}}_{(a,+\infty)}(M,E)}\right)=0.
\end{align}
Finally, the result follows from (\ref{4.5}), (\ref{4.6}) and
\begin{multline}
\log\left[{\rm
det}'\left(d_{\bar{0}}^{\#}d_{\bar{0}}|_{\Omega^{\bar{0}}_{(a,+\infty)}(M,E)}\right)^{-1}\cdot\left({\rm
det}'\left(d_{\bar{1}}^{\#}d_{\bar{1}}|_{\Omega^{\bar{1}}_{(a,+\infty)}(M,E)}\right)\right)\right]\\
=\lim_{s\to 0}\left[f(s,u)-{1\over
s}\sum_{k=0,1}(-1)^{k}\zeta\left(0,d_{\bar{k}}^{\#}d_{\bar{k}}|_{\Omega^{\bar{k}}_{(a,+\infty)}(M,E)}\right)\right].
\end{multline}
\end{proof}

\begin{lemma}\label{t4.2}
Under the same assumptions, along any one-parameter deformation of
$(g_{u},b_{u})$, we have
\begin{align}\label{4.8}
\left.{{\partial}\over{\partial w}}\right|_{u}\left({{b_{w,{\rm
det}H^{\bullet}(\Omega^{\bullet}_{[0,a]}(M,E),d)}} \over{b_{u,{\rm
det}H^{\bullet}(\Omega^{\bullet}_{[0,a]}(M,E),d)}}}\right)=\sum_{k=0,1}(-1)^{k}{\rm
Tr}\left(\alpha Q_{\bar{k}}\right).
\end{align}
\end{lemma}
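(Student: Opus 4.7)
The plan is to reduce Lemma \ref{t4.2} to a finite-dimensional Gram-determinant computation on the fixed subcomplex $(\Omega^{\bullet}_{[0,a]}(M,E),d)$ at the base parameter $u$, after transporting the neighbouring subcomplexes back to $u$ via the spectral projection.

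First, for $w$ near $u$ define $\iota_w := Q_{\bar{k}}(w)\big|_{\Omega^{\bar{k}}_{[0,a]}(M,E)(u)} : \Omega^{\bar{k}}_{[0,a]}(M,E)(u) \to \Omega^{\bar{k}}_{[0,a]}(M,E)(w)$, where I display the $w$-dependence of the spectral projection explicitly. Since $Q_{\bar{k}}(w)$ commutes with $d$ and depends smoothly on $w$ with $\iota_u=\text{id}$, $\iota_w$ is a chain isomorphism for $w$ near $u$. Writing $Q_{\bar{k}}(w)=1-\Pi_{\bar{k}}(w)$ and using that $(\Omega^{\bullet}_{(a,+\infty)}(M,E)(w),d)$ is acyclic, $\iota_w$ realizes the canonical identification of $H^{\bullet}(\Omega^{\bullet}_{[0,a]}(u),d)$ with $H^{\bullet}(\Omega^{\bullet}_{[0,a]}(w),d)$ through $H^{\bullet}(\Omega^{\bullet}(M,E),d)$. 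Consequently $\beta'_w := \iota_w^{*}\beta_{g_w,b_w}$ is a smooth family of bilinear forms on the fixed complex $\Omega^{\bullet}_{[0,a]}(M,E)(u)$, and the induced bilinear form on $\det H^{\bullet}$ equals $b_{w,\det H^{\bullet}(\Omega^{\bullet}_{[0,a]}(M,E),d)}$.

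The key step is the cancellation of cross-terms: differentiating the pullback at $w=u$,
\begin{align*}
\frac{\partial}{\partial w}\bigg|_u \beta'_w(x,y) = (\partial_w \beta_w)|_u(x,y) + \beta_u(\partial_w\iota_w|_u x,y) + \beta_u(x,\partial_w\iota_w|_u y).
\end{align*}
Differentiating $Q_{\bar{k}}(w)^{2}=Q_{\bar{k}}(w)$ shows that $\partial_w Q_{\bar{k}}(w)|_u$ carries ${\rm im}\,Q_{\bar{k}}(u)$ into ${\rm ker}\,Q_{\bar{k}}(u)=\Omega^{\bar{k}}_{(a,+\infty)}(M,E)(u)$. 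Since the spectral decomposition (\ref{2.3}) is $\beta_u$-orthogonal, both cross-terms vanish, and what remains is $\beta_u(x,\alpha y)$, arising from $\partial_w\beta_w|_u$ by direct differentiation of (\ref{1}) and the definitions of $*_u^{-1}\partial_u *_u$ and $b_u^{-1}\partial_u b_u$.

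Finally, choose a basis $\{e^{\bar{k}}_{i}\}$ of $\Omega^{\bar{k}}_{[0,a]}(M,E)(u)$ and set $G^{\bar{k}}(w)_{ij}=\beta'_w(e^{\bar{k}}_{i},e^{\bar{k}}_{j})$; then $\partial_w G^{\bar{k}}|_u=G^{\bar{k}}(u)[\alpha]^{\bar{k}}$, with $[\alpha]^{\bar{k}}$ the matrix of $\alpha$ on $\Omega^{\bar{k}}_{[0,a]}(M,E)(u)$, whence $\partial_w\log\det G^{\bar{k}}|_u={\rm Tr}(\alpha Q_{\bar{k}})$. Since the bilinear form on the $\mathbb{Z}_2$-graded determinant line $\det\Omega^{\bar{0}}_{[0,a]}\otimes(\det\Omega^{\bar{1}}_{[0,a]})^{-1}$ is $\det G^{\bar{0}}/\det G^{\bar{1}}$, and since $b_w/b_u=1$ at $w=u$, the derivative of the ratio coincides with the logarithmic derivative and yields (\ref{4.8}). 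The main obstacle is the cross-term cancellation in the previous paragraph, which genuinely uses the $\beta_u$-orthogonality of ${\rm im}\,Q_{\bar{k}}(u)$ and $\Omega^{\bar{k}}_{(a,+\infty)}(M,E)(u)$ that is characteristic of the symmetric-bilinear-form setting.
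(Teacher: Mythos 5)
Your proposal is correct and follows essentially the same route as the paper's proof: identify the nearby finite-rank subcomplexes via the spectral projection $Q_{\bar k}(w)$, pull back $\beta_{g_w,b_w}$ to the fixed complex at $u$, and express the ratio of induced determinant-line bilinear forms as a ratio of Gram determinants whose logarithmic derivative is a trace. The paper writes down the same determinant formula and then defers the differentiation to \cite{BH2}; you supply the missing computation yourself, including the key observation that $\partial_w Q_{\bar k}|_u$ maps ${\rm im}\,Q_{\bar k}(u)$ into $\ker Q_{\bar k}(u)$ so that $\beta_u$-orthogonality of the spectral decomposition kills the cross-terms, which is exactly what the [BH2] argument relies on.
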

\begin{proof}
For sufficiently small $w-u$, the restriction of the spectral
projection
$$Q_{\bar{k}}|_{\Omega^{\bar{k}}_{u,[0,a]}(M,E)}:\Omega^{\bar{k}}_{u,[0,a]}(M,E)\longrightarrow \Omega^{\bar{k}}_{w,[0,a]}(M,E)$$
is an isomorphism of complexes. Then for sufficiently small $w-u$,
we have
\begin{multline}
{{b_{w,{\rm det}H^{\bullet}(\Omega^{\bullet}_{[0,a]}(M,E),d)}}
\over{b_{u,{\rm
det}H^{\bullet}(\Omega^{\bullet}_{[0,a]}(M,E),d)}}}={\rm
det}\left(\left(\beta_{g_u,b_u}|_{\Omega^{\bar{0}}_{u,[0,a]}(M,E)}\right)^{-1}
\left(Q_{\bar{0}}|_{\Omega^{\bar{0}}_{u,[0,a]}(M,E)}\right)^{*}\right.\\
\left.\left(\beta_{g_w,b_w}|_{\Omega^{\bar{0}}_{w,[0,a]}(M,E)}\right)\right)\cdot
{\rm
det}\left(\left(\beta_{g_u,b_u}|_{\Omega^{\bar{1}}_{u,[0,a]}(M,E)}\right)^{-1}
\left(Q_{\bar{1}}|_{\Omega^{\bar{1}}_{u,[0,a]}(M,E)}\right)^{*}\right.\\
\left.\left(\beta_{g_w,b_w}|_{\Omega^{\bar{1}}_{w,[0,a]}(M,E)}\right)\right)^{-1}.
\end{multline}
Then similarly as in \cite{BH2}, we get (\ref{4.8}).

\end{proof}

Combining Lemma \ref{t4.1} and Lemma \ref{t4.2}, we have
\begin{thm}
Let $M$ be a closed, compact manifold of odd dimension, $E$ be a
flat vector bundle over $M$, and $H$ be a closed differential form
on $M$ of odd degree. Then the symmetric bilinear torsion
$\tau_{b,\nabla,H}$ on the twisted de Rham complex does not depend
on the choices of the Riemannian metric on $M$ and the symmetric
bilinear form $b$ in a same homotopy class of non-degenerate
symmetric bilinear forms on $E$.
\end{thm}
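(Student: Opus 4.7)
The plan is to show that along any smooth one-parameter deformation $(g_u, b_u)$ of the metric and the symmetric bilinear form the $u$-derivative of $\tau_{b_u,\nabla,H}$ vanishes, and then to use connectedness of the parameter space. For a given value of $u$ I would choose the cutoff $a\geq 0$ so that $a$ lies outside the spectrum of each $\Delta_{b_u,\bar k}$ for $u$ in a small neighborhood; this ensures that the spectral projections $Q_{\bar k}$, the subcomplexes $\Omega^{\bar k}_{u,[0,a]}(M,E)$ and their $\beta_{g_u,b_u}$-orthogonal complements vary smoothly in $u$. By Proposition 3.2 the value of $\tau_{b,\nabla,H}$ is independent of $a$, so proving the infinitesimal invariance locally in $u$ is sufficient.

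Next I would differentiate each of the three factors in the definition (\ref{3.3}) of $\tau_{b,\nabla,H}$. Lemma \ref{t4.1} supplies
\begin{align*}
\partial_u \log\!\left[{\rm det}'\!\left(d_{\bar{0}}^{\#}d_{\bar{0}}|_{\Omega^{\bar{0}}_{(a,+\infty)}(M,E)}\right)^{-1}\!\cdot {\rm det}'\!\left(d_{\bar{1}}^{\#}d_{\bar{1}}|_{\Omega^{\bar{1}}_{(a,+\infty)}(M,E)}\right)\right] = -\sum_{k=0,1}(-1)^{k}\tr\!\left(\alpha Q_{\bar k}\right),
\end{align*}
while Lemma \ref{t4.2}, evaluated at $w=u$ where the ratio on its left-hand side equals $1$ and hence its $w$-derivative agrees with the logarithmic derivative of $b_{w,{\rm det}H^{\bullet}(\Omega^{\bullet}_{[0,a]}(M,E),d)}$, gives the opposite sign $+\sum_{k=0,1}(-1)^{k}\tr(\alpha Q_{\bar k})$. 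The two contributions cancel exactly, so the logarithmic $u$-derivative of $\tau_{b_u,\nabla,H}$ is zero when the latter is regarded, via the canonical identification coming from the inclusion of the finite-dimensional subcomplex $\Omega^{\bullet}_{[0,a]}(M,E)\hookrightarrow \Omega^{\bullet}(M,E)$ and the isomorphism (\ref{2.2}), as a family of bilinear forms on the single determinant line ${\rm det}H^{\bullet}(\Omega^{\bullet}(M,E,H),d)$.

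Finally I would promote this infinitesimal statement to a global one. The space of Riemannian metrics on $M$ is convex and hence path-connected, and by the definition of homotopy class any two non-degenerate symmetric bilinear forms $b_0,b_1$ on $E$ lying in the same class can be joined by a smooth path through non-degenerate bilinear forms; so any two admissible pairs $(g_0,b_0)$, $(g_1,b_1)$ are connected by a smooth path $(g_u,b_u)$, $u\in[0,1]$. Integrating the vanishing derivative along this path yields $\tau_{b_0,\nabla,H}=\tau_{b_1,\nabla,H}$. The only substantive point, rather than a genuine obstacle, is the smooth identification of the spaces $\Omega^{\bar k}_{u,[0,a]}(M,E)$ across the deformation by the spectral projection, which is precisely what Lemma \ref{t4.2} delivers; once this is in place, the theorem reduces to the cancellation of the two trace expressions above.
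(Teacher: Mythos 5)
Your proposal follows exactly the paper's route: the paper derives Theorem 4.3 directly by combining Lemma 4.1 (variation of the zeta-determinant factor) and Lemma 4.2 (variation of the finite-dimensional factor), and the theorem follows from the sign cancellation of the two trace expressions $\mp\sum_{k=0,1}(-1)^k\tr(\alpha Q_{\bar k})$, together with path-connectedness of the parameter space. You have simply made explicit the cancellation and the integration along a path, both of which the paper leaves implicit in the phrase ``Combining Lemma 4.1 and Lemma 4.2.''
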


\subsection{Variation of analytic torsion with respect to the flux in a cohomology class}
We continue to assume that ${\rm dim}M$ is odd and use the same
notation as above. Suppose the (real) flux form $H$ is deformed
smoothly along a one-parameter family with parameter
$v\in\mathbb{R}$ in such a way that the cohomology class $[H]\in
H^{\bar{1}}(M,\mathbb{R})$ is fixed. Then ${{\partial
H}\over{\partial v}}=-dB$ for some form $B\in \Omega^{\bar{0}}(M)$
that depends smoothly on $v$; let
$$\beta=B\wedge\cdot.$$
\begin{lemma}\label{t4.4}
Under the above assumptions,
\begin{multline}
{\partial\over{\partial v}}\log\left[{\rm
det}'\left(d_{\bar{0}}^{\#}d_{\bar{0}}|_{\Omega^{\bar{0}}_{(a,+\infty)}(M,E)}\right)^{-1}\cdot\left({\rm
det}'\left(d_{\bar{1}}^{\#}d_{\bar{1}}|_{\Omega^{\bar{1}}_{(a,+\infty)}(M,E)}\right)\right)\right]\\
=-2\sum_{k=0,1}(-1)^{k}{\rm Tr}(\beta Q_{\bar{k}}).
\end{multline}
\end{lemma}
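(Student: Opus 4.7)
The plan is to mirror the proof of Lemma \ref{t4.1}, with $\beta$ playing the role of $\alpha$, carefully tracking the extra contribution that produces the factor of $2$. The crucial difference from Lemma \ref{t4.1} is that now it is the operator $d_{\bar k}$ itself that is being varied (through $H$), whereas the symmetric bilinear form $\beta_{g,b}$ is fixed; the adjoint $d_{\bar k}^\#$ of course varies as a consequence.

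First I would compute the $v$-variations of $d_{\bar k}$ and $d_{\bar k}^\#$. Since $B$ is of even degree and $H$ is of odd degree, a short computation using the Leibniz rule for $\nabla$ and the fact that $B\wedge H = H\wedge B$ yields $[d_{\bar k},\beta] = dB\wedge$, so $\partial d_{\bar k}/\partial v = -[d_{\bar k},\beta]$. Because $\beta_{g,b}$ does not depend on $v$, taking the $\#$-adjoint gives $\partial d_{\bar k}^\#/\partial v = [d_{\bar k}^\#,\beta^\#]$, where $\beta^\#$ denotes the $\beta_{g,b}$-adjoint of the multiplication $B\wedge\cdot$.

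Next I would introduce the same auxiliary function $f(s,v)$ as in the proof of Lemma \ref{t4.1} and differentiate in $v$ via Duhamel's principle, using the identities $P_{\bar k}d_{\bar k}^\#=d_{\bar k}^\#$, $d_{\bar k}P_{\bar k}=d_{\bar k}$, and $P_{\bar k}(\partial P_{\bar k}/\partial v)=0$. Inserting the two commutator formulas into the expression for $\partial(d_{\bar k}^\#d_{\bar k})/\partial v$ and applying trace cyclicity yields a sum whose surviving pieces pair $\beta$ and $\beta^\#$ symmetrically with $e^{-t\Delta_{b,\bar k}}\Delta_{b,\bar k}\Pi_{\bar k}$. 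Since this heat-operator factor is $\beta_{g,b}$-self-adjoint and the trace identity ${\rm Tr}(\beta X)={\rm Tr}(\beta^\# X)$ holds whenever $X=X^\#$ (which follows from ${\rm Tr}(AB)={\rm Tr}(A^\#B^\#)$ for any $\#$-adjoint), the $\beta^\#$-contributions can be replaced by $\beta$-contributions, producing exactly the factor of $2$ relative to Lemma \ref{t4.1}. The outcome is
\begin{equation*}
\frac{\partial f}{\partial v} = 2\sum_{k=0,1}(-1)^{k}\int_{0}^{+\infty}t^{s}\,{\rm Tr}\bigl(\beta\, e^{-t\Delta_{b,\bar k}}\Delta_{b,\bar k}\Pi_{\bar k}\bigr)\,dt.
\end{equation*}

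The remaining step is essentially identical to that of Lemma \ref{t4.1}: integrate by parts in $t$, split the integral at $t=1$, and invoke that $\beta$ is a smooth multiplication operator of order $0$ and that $n=\dim M$ is odd, so that the small-$t$ heat asymptotic of ${\rm Tr}(\beta\,e^{-t\Delta_{b,\bar k}})$ has no constant term. This gives simultaneously $\partial_v\zeta(0,d_{\bar k}^\#d_{\bar k}|_{\Omega^{\bar k}_{(a,+\infty)}(M,E)})=0$ and $\partial f/\partial v\big|_{s=0}=-2\sum_k(-1)^k{\rm Tr}(\beta Q_{\bar k})$, which is the claimed formula. The main obstacle is algebraic rather than analytic: one must organize the four commutator terms coming from $\partial(d_{\bar k}^\#d_{\bar k})/\partial v$ so that, after use of $P_{\bar k}$-invariance and trace cyclicity, the $\beta$- and $\beta^\#$-pieces assemble into $2\beta\Delta_{b,\bar k}$ modulo total $t$-derivatives that disappear after integration by parts. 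The appearance of $\beta^\#$, which has no direct geometric meaning, is what makes the trace identity above indispensable; once it is available, the odd-dimensional heat-asymptotics argument of Lemma \ref{t4.1} carries over verbatim.
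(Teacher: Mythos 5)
Your proposal follows the same route as the paper, which sets up $f(s,v)$ as in Lemma~\ref{t4.1}, runs Duhamel's principle, and then defers the bookkeeping to the Hermitian prototype in \cite[Lemma 3.5]{MW}. You correctly identify the one genuinely new ingredient: since now $d_{\bar k}$ itself varies, both $\partial_v d_{\bar k}=[\beta,d_{\bar k}]$ and $\partial_v d_{\bar k}^{\#}=-[\beta^{\#},d_{\bar k}^{\#}]$ contribute, and the $\beta^{\#}$-terms are converted to $\beta$-terms via ${\rm Tr}(\beta^{\#}X)={\rm Tr}(\beta X)$ for $X=X^{\#}$ (valid because ${\rm Tr}(A^{\#})={\rm Tr}(A)$ for a bilinear transpose), giving the factor of $2$. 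The integration-by-parts and odd-dimensional small-$t$ heat-asymptotics step then carries over verbatim.

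One technical slip worth flagging: you list $P_{\bar k}\,\partial_v P_{\bar k}=0$ among the identities you rely on, but this is the Lemma~\ref{t4.1} identity, where it holds because $\ker P_{\bar k}=\ker d_{\bar k}$ is $u$-independent. In the flux deformation $d_{\bar k}$ itself moves, so $\ker P_{\bar k}$ moves, and one only has the weaker (and always true) relation $P_{\bar k}\,(\partial_v P_{\bar k})\,P_{\bar k}=0$, together with $P_{\bar k}^{\#}=P_{\bar k}$; these are exactly the identities the paper invokes. The trace terms involving $\partial_v P_{\bar k}$ still vanish because the cyclic manipulations sandwich $\partial_v P_{\bar k}$ between two copies of $P_{\bar k}$, but if you carried your stated identity naively into the computation you could drop a term prematurely and obscure where the cancellation actually occurs.
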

\begin{proof}
As in the proof of Lemma \ref{t4.1}, we set
$$f(s,v)=\sum_{k=0,1}(-1)^{k}\int_{0}^{+\infty}t^{s-1}{\rm Tr}\left(e^{-td_{\bar{k}}^{\#}d_{\bar{k}}}P_{\bar{k}}
|_{\Omega^{\bar{k}}_{(a,+\infty)}(M,E)}\right)dt.$$ We note that
$B$, hence $\beta$ is real. Using
$${{\partial d_{\bar{k}}}\over{\partial v}}=\left[\beta,d_{\bar{k}}\right],\ {{\partial d_{\bar{k}}^{\#}}
\over{\partial v}}=-\left[\beta^{\#},d_{\bar{k}}^{\#}\right],$$
$$P_{\bar{k}}^{2}=P_{\bar{k}}=P_{\bar{k}}^{\#},\ P_{\bar{k}}{{\partial P_{\bar{k}}}\over{\partial v}}P_{\bar{k}}=0$$
and by Dumahel's principle, similarly as \cite[Lemma 3.5]{MW}, we
get
\begin{align}
{{\partial f}\over{\partial
v}}=-2\sum_{k=0,1}(-1)^{k}\int_{0}^{+\infty}t^{s}{\partial\over{\partial
t}}{\rm Tr}\left(\beta
e^{-t\Delta_{b,\bar{k}}}\Pi_{\bar{k}}\right)dt.
\end{align}
The rest is similar to the proof of Lemma \ref{t4.1}.
\end{proof}

\begin{lemma}\label{t4.5}
Under the same assumptions, along any one-parameter deformation of
$H$ that fixes the cohomology class $[H]$, then we have
\begin{align}\label{4.11}
\left.{{\partial}\over{\partial w}}\right|_{v}\left({{b_{{\rm
det}H^{\bullet}(\Omega^{\bullet}_{[0,a]}(M,E,H^{w}),d)}}
\over{b_{{\rm
det}H^{\bullet}(\Omega^{\bullet}_{[0,a]}(M,E,H^{v}),d)}}}\right)=2\sum_{k=0,1}(-1)^{k}{\rm
Tr}\left(\beta Q_{\bar{k}}\right),
\end{align}
where we identify ${\rm det}H^{\bullet}(M,E,H)$ along the
deformation.
\end{lemma}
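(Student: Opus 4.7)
The plan is to adapt the argument of Lemma \ref{t4.2} to flux variation. Now both the differentials $d_{\bar{k}}$ (via $\partial d_{\bar{k}}/\partial v=[\beta,d_{\bar{k}}]$) and the cohomology identification (via $\varepsilon_{B}=e^{B}\wedge$) depend on the parameter. Choose $B_{v,w}\in\Omega^{\bar{0}}(M)$ smoothly in $w$ with $B_{v,v}=0$ and $\left.\partial B_{v,w}/\partial w\right|_{w=v}=B$, so that $\varepsilon_{B_{v,w}}$ implements the identification of cohomologies referred to in the statement and reduces to the identity at $w=v$.

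For $|w-v|$ small, compose with the spectral projection $Q_{\bar{k}}^{w}$ of $\Delta_{b,\bar{k}}$ at parameter $w$ to obtain a chain isomorphism of finite-dimensional complexes
$$U_{\bar{k}}(w):=Q_{\bar{k}}^{w}\circ\varepsilon_{B_{v,w}}:\Omega^{\bar{k}}_{v,[0,a]}(M,E)\longrightarrow\Omega^{\bar{k}}_{w,[0,a]}(M,E),\qquad U_{\bar{k}}(v)={\rm id}.$$
As in the proof of Lemma \ref{t4.2}, the ratio in (\ref{4.11}) can then be written as
$$\prod_{k=0,1}{\rm det}\!\left(\bigl(\beta_{g,b}|_{\Omega^{\bar{k}}_{v,[0,a]}}\bigr)^{-1}\widetilde{\beta}_{w}^{k}\right)^{(-1)^{k}},\qquad \widetilde{\beta}_{w}^{k}(\xi,\eta):=\beta_{g,b}\bigl(U_{\bar{k}}(w)\xi,U_{\bar{k}}(w)\eta\bigr),$$
i.e., as a pullback of the restricted bilinear form to the $v$-subcomplex.

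Differentiating at $w=v$ and applying Jacobi's formula reduces the problem to evaluating $\sum_{k}(-1)^{k}{\rm Tr}\bigl((\beta_{g,b}|_{\Omega^{\bar{k}}_{v,[0,a]}})^{-1}\left.\partial_{w}\widetilde{\beta}_{w}^{k}\right|_{w=v}\bigr)$. By Leibniz, the derivative splits as $\beta_{g,b}(\dot{U}_{\bar{k}}(v)\xi,\eta)+\beta_{g,b}(\xi,\dot{U}_{\bar{k}}(v)\eta)$, and the symmetry of $\beta_{g,b}$ together with the cyclic property of the trace forces both summands to contribute the same quantity, yielding the factor of $2$. Moreover $\dot{U}_{\bar{k}}(v)=\left.\partial_{w}Q_{\bar{k}}^{w}\right|_{w=v}+\beta$, and the first piece sends $\Omega^{\bar{k}}_{v,[0,a]}$ into its $\beta_{g,b}$-orthogonal complement $\Omega^{\bar{k}}_{v,(a,+\infty)}$ by the identity $Q\partial_{w}Q\,Q=0$ used in Lemma \ref{t4.1}, so it drops out of the pairing with $\eta\in\Omega^{\bar{k}}_{v,[0,a]}$. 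What survives is ${\rm Tr}(\beta Q_{\bar{k}})$ from each of the two Leibniz summands, giving (\ref{4.11}).

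The main obstacle I anticipate is the rigorous setup of the intertwining: strictly $\varepsilon_{B_{v,w}}$ intertwines $\nabla^{H^{v}}$ with $\nabla^{H^{v}-dB_{v,w}}$, not with $\nabla^{H^{w}}$, so one must verify that $H^{w}-H^{v}+dB_{v,w}=O((w-v)^{2})$ in order to conclude that the discrepancy does not enter the first-order computation. Once this smoothness check is in place, the algebraic computation above yields (\ref{4.11}), consistent with Lemma \ref{t4.4} and with the expected $v$-independence of the full torsion (the two contributions cancelling with opposite signs).
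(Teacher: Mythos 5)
Your proposal is correct and follows essentially the same route as the paper: the paper likewise uses the chain isomorphism $Q_{\bar{k}}\varepsilon_{B}$ to pull back $b_{w,{\rm det}H^{\bullet}}$ to the $v$-subcomplex and then compares with $b_{v,{\rm det}H^{\bullet}}$, deferring the computation to \cite[Lemma~3.7]{MW}, whereas you carry out the Jacobi/Leibniz step and the $Q\dot{Q}Q=0$ cancellation explicitly. The ``main obstacle'' you raise at the end is not actually one: since $\partial H^{v}/\partial v=-dB_{v}$, taking $B_{v,w}=\int_{v}^{w}B_{s}\,ds$ gives $H^{w}=H^{v}-dB_{v,w}$ \emph{exactly}, so $\varepsilon_{B_{v,w}}$ intertwines $\nabla^{H^{v}}$ with $\nabla^{H^{w}}$ on the nose while still satisfying $B_{v,v}=0$ and $\partial_{w}B_{v,w}|_{w=v}=B$, and your first-order computation goes through without any error estimate.
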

\begin{proof}
For sufficiently small $w-v$, we have
$$Q_{\bar{k}}\varepsilon_{B}:\Omega^{\bar{k}}_{[0,a]}(M,E,H^{v})\longrightarrow
\Omega^{\bar{k}}_{[0,a]}(M,E,H^{w})$$ is an isomorphism of complexes
and the induced symmetric bilinear form on the determinant line
${\rm det}H^{\bullet}(\Omega^{\bullet}_{[0,a]}(M,E,H^{u}),d)$ is
\begin{multline}
\left(\left({\rm
det}\left(Q_{\bar{k}}\varepsilon_{B}\right)^*{b_{{\rm
det}H^{\bullet}(\Omega^{\bullet}_{[0,a]}(M,E,H^{w}),d)}}\right)\right)(\cdot\ ,\ \cdot)\\
={b_{{\rm
det}H^{\bullet}(\Omega^{\bullet}_{[0,a]}(M,E,H^{w}),d)}}\left({\rm
det}\left(Q_{\bar{k}}\varepsilon_{B}\right)\cdot\ ,\ {\rm
det}\left(Q_{\bar{k}}\varepsilon_{B}\right)\cdot\right),
\end{multline}
where
$${\rm det}\left(Q_{\bar{k}}\varepsilon_{B}\right):{\rm det}H^{\bullet}\left(\Omega^*_{[0,a]}(M,E,H^{v})\right)
\longrightarrow{\rm
det}H^{\bullet}\left(\Omega^*_{[0,a]}(M,E,H^{w})\right)$$ is the
induced isomorphism on the determinant lines. Then we can compare it
with ${b_{{\rm
det}H^{\bullet}(\Omega^{\bullet}_{[0,a]}(M,E,H^{u}),d)}}$, similarly
as \cite[Lemma 3.7]{MW}, we get (\ref{4.11}).
\end{proof}

Combining Lemma \ref{t4.4} and Lemma \ref{t4.5}, we have
\begin{thm}
Let $M$ be a closed, compact manifold of odd dimension, $E$ be a
flat vector bundle over $M$. Suppose $H$ and $H'$ are closed
differential forms on $M$ of odd degrees representing the same de
Rham cohomology class, and let $B$ be an even form so that
$H'=H-dB$. Then the symmetric bilinear torsion $({\rm
det}\varepsilon_{B})^*\tau_{b,\nabla,H'}=\tau_{b,\nabla,H}$.
\end{thm}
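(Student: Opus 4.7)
The plan is to connect $H$ and $H'=H-dB$ by the smooth one-parameter family $H_v:=H-v\,dB$ for $v\in[0,1]$, with primitive $B_v:=vB$, so that $\partial_v H_v=-dB$ and $\partial_v B_v = B$. Along this family, the isomorphisms $\varepsilon_{B_v}:\Omega^\bullet(M,E,H)\to\Omega^\bullet(M,E,H_v)$ from Section 3.1 intertwine $\nabla^{H}$ and $\nabla^{H_v}$, and hence canonically identify the determinant lines ${\rm det}\,H^\bullet(M,E,H_v)$ as $v$ varies. It suffices to show that the pullback $({\rm det}\,\varepsilon_{B_v})^*\tau_{b,\nabla,H_v}$ is constant in $v\in[0,1]$: at $v=0$ this is simply $\tau_{b,\nabla,H}$ (as $\varepsilon_0={\rm id}$), and at $v=1$ it is $({\rm det}\,\varepsilon_B)^*\tau_{b,\nabla,H'}$.

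Fixing any $a\geq 0$, I would present $\tau_{b,\nabla,H_v}$ by the $a$-independent formula (\ref{3.3}) and compute $\partial_v\log\tau_{b,\nabla,H_v}$ with $\beta=B\wedge\cdot$. Lemma \ref{t4.4} controls the large-spectrum factor:
\begin{align*}
\frac{\partial}{\partial v}\log\!\left[{\rm det}'\bigl(d_{\bar{0}}^{\#}d_{\bar{0}}|_{\Omega^{\bar{0}}_{(a,+\infty)}(M,E)}\bigr)^{-1}\cdot{\rm det}'\bigl(d_{\bar{1}}^{\#}d_{\bar{1}}|_{\Omega^{\bar{1}}_{(a,+\infty)}(M,E)}\bigr)\right]=-2\sum_{k=0,1}(-1)^k{\rm Tr}(\beta Q_{\bar{k}}),
\end{align*}
while Lemma \ref{t4.5} controls the small-spectrum factor:
\begin{align*}
\frac{\partial}{\partial v}\log b_{{\rm det}\,H^\bullet(\Omega^\bullet_{[0,a]}(M,E,H_v),d)}=+2\sum_{k=0,1}(-1)^k{\rm Tr}(\beta Q_{\bar{k}}).
\end{align*}
These two contributions cancel exactly, so $\partial_v\log\tau_{b,\nabla,H_v}=0$ on $[0,1]$. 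Integrating from $0$ to $1$ yields $({\rm det}\,\varepsilon_B)^*\tau_{b,\nabla,H'}=\tau_{b,\nabla,H}$.

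The substantial work has already been carried out in the two preceding lemmas; the present theorem is purely the integration of their infinitesimal cancellation. The only nontrivial point to verify is that a single $a\geq 0$ can be used uniformly along the deformation. This follows from the $a$-independence of $\tau_{b,\nabla,H_v}$ combined with smooth deformation of the spectral projections of $\Delta_{b,\bar{k}}$ in small neighborhoods of any $v_0\in[0,1]$, patched together by a compactness argument on $[0,1]$. The deeper content---that the two variations agree up to sign with the factor $2\sum(-1)^k{\rm Tr}(\beta Q_{\bar{k}})$---rests on the oddness of $\dim M$, entering through the vanishing of the constant term in the small-$t$ asymptotic of ${\rm Tr}(\beta e^{-t\Delta_{b,\bar{k}}})$ that was used in the proofs of Lemmas \ref{t4.4} and \ref{t4.5}.
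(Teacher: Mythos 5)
Your proposal is correct and takes essentially the same route as the paper: the paper's proof of this theorem consists precisely of the statement ``Combining Lemma \ref{t4.4} and Lemma \ref{t4.5}, we have'' the result, which is exactly the cancellation $-2\sum_{k}(-1)^k\mathrm{Tr}(\beta Q_{\bar k}) + 2\sum_{k}(-1)^k\mathrm{Tr}(\beta Q_{\bar k})=0$ that you carry out, integrated along the linear path $H_v=H-v\,dB$ with the determinant lines identified by $\varepsilon_{B_v}$. Your added remark about choosing $a$ locally along the parameter interval and patching by compactness is a sound treatment of a technical point the paper leaves implicit.
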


\section{Compare with the refined analytic torsion}
\setcounter{equation}{0} In this section, we will compare the
symmetric bilinear torsion $\tau_{b,\nabla,H}$ with the refined
analytic torsion $\rho_{\rm an}(\nabla^{H})$ defined in \cite{H}.
The main theorem of this section is the following.
\begin{thm}\label{t5.1}
Let $M$ be a closed odd dimensional manifold, $E$ be a complex
vector bundle over $M$ with connection $\nabla$, $H$ be a closed
odd-degree differential form on $M$. Suppose there exists a
non-degenerate symmetric bilinear form on $E$. Then we have
\begin{align}\label{5.1}
\tau_{b,\nabla,H}\left(\rho_{\rm
an}\left(\nabla^{H}\right)\right)=\pm e^{-2\pi
i\left(\eta\left(\nabla^{H}\right)-{\rm rank}E\cdot\eta_{\rm
trivial}\right)},
\end{align}
where $\eta(\nabla^{H})$ and $\eta_{\rm trivial}$ are defined in
\cite{H}.
\end{thm}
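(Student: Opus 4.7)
The plan is to adapt the strategy used in \cite{BH2,SZ} for comparing the Burghelea--Haller and Braverman--Kappeler analytic torsions in the untwisted setting, now carrying it out in the twisted de Rham framework of \cite{H}. Both sides of (\ref{5.1}) are invariant under smooth deformations of the Riemannian metric $g^M$, the symmetric bilinear form $b$ (within its homotopy class), and the flux $H$ (within its cohomology class $[H]$): the left side by the two theorems of Section 4, the refined torsion $\rho_{\rm an}(\nabla^H)$ by the results of \cite{H}, and the eta-invariant exponent on the right by a standard transgression argument in odd dimensions. I would therefore be free to prove (\ref{5.1}) for any convenient triple $(g^M,b,H)$.

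Next I would use the spectral decomposition $\Omega^{\bullet}(M,E)=\Omega^{\bullet}_{[0,a]}(M,E)\oplus\Omega^{\bullet}_{(a,+\infty)}(M,E)$ of Section 3. Both torsions factor compatibly with this splitting, so (\ref{5.1}) separates into a finite-dimensional statement and an infinite-dimensional statement. The former is a purely algebraic comparison on $\mathbb{Z}_{2}$-graded complexes equipped with a non-degenerate symmetric bilinear form, which reduces via Proposition \ref{t2.1} and Huang's algebraic description of $\rho_{\rm an}$ to a finite-dimensional linear algebra identity; the indeterminacy in this identity is the source of the $\pm$ sign in (\ref{5.1}). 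The latter is the analytic core: there $\rho_{\rm an}(\nabla^H)$ is realised as a graded regularized determinant ${\rm det}'_{\rm gr}(B_{+})$ of (the even part of) the twisted odd signature operator $B=\Gamma\nabla^{H}+\nabla^{H}\Gamma$, while $\tau_{b,\nabla,H}$ is the alternating product of ${\rm det}'(d_{\bar{k}}^{\#}d_{\bar{k}})$ from (\ref{3.4}). Exploiting the key identity $B^{2}=\Delta_{b}$ on $\Omega^{\bullet}_{(a,+\infty)}(M,E)$, together with the standard decomposition $\log{\rm det}'_{\rm gr}(B_{+})=\tfrac{1}{2}\log{\rm det}'(B^{2})-\pi i\,\eta(B)$, the ratio collapses to $e^{-2\pi i\left(\eta(\nabla^{H})-{\rm rank}E\cdot\eta_{\rm trivial}\right)}$.

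The main obstacle is the careful translation of $\log{\rm det}'_{\rm gr}(B_{+})$ into the bilinear torsion modulo this eta-invariant phase. Although $B^{2}=\Delta_{b}$ is a formal identity, the operators are not self-adjoint with respect to the Hermitian structure, so one cannot invoke the self-adjoint theory of $\eta$ directly. Instead, I would run a deformation argument analogous to Lemmas \ref{t4.1} and \ref{t4.4}, differentiating $\log\tau_{b,\nabla,H}-2\pi i(\eta(\nabla^{H})-{\rm rank}E\cdot\eta_{\rm trivial})$ in a one-parameter family of bilinear forms (or of flat connections interpolating to a reference one) and showing that all local contributions either vanish in odd dimension or cancel between the two sides; what is left is precisely $\rho_{\rm an}(\nabla^{H})$. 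The trivial-connection normalization $\eta_{\rm trivial}$ then absorbs the standard regularization ambiguity, exactly as in \cite{H,BK2}, and the residual finite-dimensional comparison from step two is what carries the sign.
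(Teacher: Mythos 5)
Your proposal diverges from the paper's proof and has a genuine gap at its analytic core. The central claim that $B^{2}=\Delta_{b}$ on $\Omega^{\bullet}_{(a,+\infty)}(M,E)$ is false in general. With $\mathcal{B}^{H}=\Gamma\nabla^{H}+\nabla^{H}\Gamma$ one computes
\[
\left(\mathcal{B}^{H}\right)^{2}=\Gamma\nabla^{H}\Gamma\nabla^{H}+\nabla^{H}\Gamma\nabla^{H}\Gamma,
\]
which is the ``flat'' Laplacian $\Delta^{\flat}=\bigl(\nabla^{H}+\Gamma\nabla^{H}\Gamma\bigr)^{2}$ of Cappell--Miller type (cf.\ Section~6 of the paper). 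On the other hand, the Burghelea--Haller Laplacian is
\[
\Delta_{b}=\left(\nabla^{H}\right)^{\#}\nabla^{H}+\nabla^{H}\left(\nabla^{H}\right)^{\#}
=\Gamma\nabla'^{H}\Gamma\nabla^{H}+\nabla^{H}\Gamma\nabla'^{H}\Gamma,
\]
since $(\nabla^{H})^{\#}=\Gamma\nabla'^{H}\Gamma$, where $\nabla'$ is the $b$-dual connection. These agree only if $\nabla=\nabla'$, i.e.\ $\nabla b=0$. Consequently, the identity $\log{\rm det}'_{\rm gr}(B_{+})=\tfrac12\log{\rm det}'(B^{2})-\pi i\,\eta(B)$ does not feed into $\log\tau_{b,\nabla,H}$, and the ``ratio collapses'' step does not go through; the $\rho_{\rm an}$ side is built out of $\nabla^{H}$ alone, while $\tau_{b,\nabla,H}$ inextricably involves $\nabla'^{H}$.

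This mismatch is exactly what the paper's proof is structured to resolve, and it does so by a route absent from your sketch. The paper introduces the dual connection $\nabla'$ and proves two duality lemmas: $\alpha(\rho_{\rm an}(\nabla^{H}))=\rho_{\rm an}(\nabla'^{H})$ (Lemma 5.5) and $\tau_{b,\nabla,H,(a,+\infty)}=\tau_{b,\nabla',H,(a,+\infty)}$ (equation (5.6)); these yield $\tau_{b,\nabla,H}(\rho_{\rm an}(\nabla^{H}))=\tau_{b,\nabla',H}(\rho_{\rm an}(\nabla'^{H}))$. It then passes to the doubled connection $\widetilde{\nabla}=\nabla\oplus\nabla'$, for which $\tau_{b,\widetilde{\nabla},H}(\rho_{\rm an}(\widetilde{\nabla}^{H}))=\tau_{b,\nabla,H}(\rho_{\rm an}(\nabla^{H}))^{2}$, reducing (\ref{5.1}) to a sign-free identity (\ref{5.10}) that is established by adapting the deformation argument of Section 4.7 of \cite{BK4} (where one deforms the flat connection, not the metric or $b$ or $H$). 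Your proposal does contain correct peripheral observations (invariance of both sides, the finite-/infinite-dimensional spectral splitting, the role of the $\pm$ sign in the finite part), but without the dual-connection duality and the squaring trick, the infinite-dimensional comparison cannot be completed as you describe.
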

We will use the method in \cite{BK4} to prove the theorem and the
proof will be given latter.

Let $h$ be a Hermitian metric on $E$. Then one can construct the
Ray-Singer analytic torsion as an inner product on ${\rm
det}H^{\bullet}(M,E,H)$, or equivalently as a metric on the
determinant line (cf. \cite[(6.13)]{H}). We denote the resulting
inner product by $\tau_{h,\nabla,H}$. Then by Theorem \ref{t5.1} and
\cite[Theorem 6.2]{H}, we get
\begin{cor}
If ${\rm dim}M$ is odd, then the following identity holds:
$$\left|{{\tau_{b,\nabla,H}}\over{\tau_{h,\nabla,H}}}\right|=1.$$
\end{cor}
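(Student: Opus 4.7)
The plan is to derive the identity from Theorem \ref{t5.1} together with Huang's comparison theorem \cite[Theorem 6.2]{H} by evaluating both torsions on the refined analytic torsion element $\rho_{\rm an}(\nabla^{H})\in{\rm det}H^{\bullet}(M,E,H)$ and then passing to absolute values. The key observation is that, on a one-dimensional complex line, a symmetric bilinear form and a Hermitian inner product both scale by $|c|^{2}$ under $v\mapsto cv$, so the ratio $|\tau_{b,\nabla,H}(v,v)|/\tau_{h,\nabla,H}(v,v)$ is independent of the choice of non-zero $v$; it therefore suffices to evaluate it at $v=\rho_{\rm an}(\nabla^{H})$.

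First I would take the modulus of both sides of (\ref{5.1}). Using $|e^{-2\pi i z}|=e^{2\pi\,{\rm Im}\,z}$ for $z\in\bC$, Theorem \ref{t5.1} gives
\[
\bigl|\tau_{b,\nabla,H}\bigl(\rho_{\rm an}(\nabla^{H})\bigr)\bigr| = e^{2\pi\,{\rm Im}(\eta(\nabla^{H})-{\rm rank}E\cdot\eta_{\rm trivial})}.
\]
Next I would invoke \cite[Theorem 6.2]{H}, the Hermitian-metric counterpart of Theorem \ref{t5.1}, which expresses $\tau_{h,\nabla,H}\bigl(\rho_{\rm an}(\nabla^{H})\bigr)$ as the same positive real quantity $e^{2\pi\,{\rm Im}(\eta(\nabla^{H})-{\rm rank}E\cdot\eta_{\rm trivial})}$. (In both cases the imaginary part of the eta invariant controls the size of the refined analytic torsion element, exactly as in the Braverman--Kappeler formula in the untwisted setting.) Dividing the two identities makes the exponentials cancel and yields $|\tau_{b,\nabla,H}(\rho_{\rm an})|/\tau_{h,\nabla,H}(\rho_{\rm an})=1$, which by the scale-independence noted above is the asserted global identity.

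The argument is essentially bookkeeping; no new analytic input is required beyond the two input theorems. The main (and modest) point to verify is that the normalizations of $\eta(\nabla^{H})$ and $\eta_{\rm trivial}$ in \cite[Theorem 6.2]{H} are the same ones used in Theorem \ref{t5.1}, so that the two right-hand sides are literally identical before cancellation. Since both invariants are referenced to \cite{H} and Theorem \ref{t5.1} is built to match that normalization, this compatibility is automatic, and the corollary follows immediately.
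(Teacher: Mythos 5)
Your proposal is correct and matches the paper's intended argument exactly: the paper's own proof of this corollary is the one-line remark that it follows from Theorem \ref{t5.1} together with Huang's Theorem 6.2, and what you have done is simply spell out the bookkeeping — evaluate both torsions at $\rho_{\rm an}(\nabla^{H})$, take moduli (using $|e^{-2\pi i z}|=e^{2\pi\,{\rm Im}\,z}$ and the fact that both sides scale by $|c|^{2}$ under $v\mapsto cv$ on the one-dimensional determinant line), and observe that the exponentials cancel. No gap; this is the proof the paper leaves implicit.
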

\subsection{The dual connection}
Let $M$ be an odd dimensional closed manifold and $E$ be a flat
vector bundle over $M$, with flat connection $\nabla$. Assume that
there exists a non-degenerate symmetric bilinear form $b$ on $E$.
The dual connection $\nabla'$ to $\nabla$ on $E$ with respect to the
form $b$ is defined by the formula
$$db(u,v)=b(\nabla u,v)+b(u,\nabla' v),\ \ u,v\in\Gamma(M,E).$$
We denote by $E'$ the flat vector bundle $(E,\nabla')$.
\subsection{Choices of the metric and the spectral cut}
Till the end of this section we fix a Riemannian metric $g$ on $M$
and set
$\mathcal{B}^{H}=\mathcal{B}(\nabla^H,g)=\Gamma\nabla^{H}+\nabla^{H}\Gamma$
and
$\mathcal{B'}^{H}=\mathcal{B'}(\nabla'^H,g)=\Gamma\nabla'^{H}+\nabla'^{H}\Gamma$,
where $\Gamma:\Omega^{\bullet}(M,E)\to \Omega^{\bullet}(M,E)$ is the
chirality operator defined by
$$\Gamma \omega=i^{{n+1}\over 2}*(-1)^{{q(q+1)}\over 2}\omega,\ \ \omega\in\Omega^{q}(M,E).$$

We also fix $\theta\in(-\pi/2,0)$ such that both $\theta$ and
$\theta+\pi$ are Agmon angles for the odd signature operator
$\mathcal{B}^{H}$. One easily checks that
\begin{align}\label{5.2}
\left(\nabla^{H}\right)^{\#}=\Gamma \nabla'^{H}\Gamma,\ \
\left(\nabla'^{H}\right)^{\#}=\Gamma\nabla^{H}\Gamma,\ {\rm and}\
\left(\mathcal{B}^{H}\right)^{\#}=\mathcal{B}'^{H}.
\end{align}
As $\mathcal{B}^{H}$ and $(\mathcal{B}^{H})^{\#}$ have the same
spectrum it then follows that
\begin{align}
\eta\left(\mathcal{B}'^{H}\right)=\eta\left(\mathcal{B}^{H}\right)\
{\rm and}\ {\rm Det}_{{\rm
gr},\theta}\left(\mathcal{B}'^{H}\right)={\rm Det}_{{\rm
gr},\theta}\left(\mathcal{B}^{H}\right).
\end{align}
\subsection{A proof of Theorem \ref{t5.1}}
The symmetric bilinear form $\beta_{g,b}$ induces a non-degenerate
symmetric bilinear form
$$H^{j}(M,E')\otimes H^{n-j}(M,E)\longrightarrow \mathbb{C},\ \ j=0,\cdots,n,$$
and, hence, identifies $H^{j}(M,E')$ with the dual space of
$H^{n-j}(M,E)$. Using the construction of \cite[Section 5.1]{H}
(with $\tau:\mathbb{C}\to \mathbb{C}$ be the identity map) we obtain
a linear isomorphism
\begin{align}
\alpha:{\rm det}H^{\bullet}(M,E,H)\longrightarrow {\rm
det}H^{\bullet}(M,E',H).
\end{align}
\begin{lemma}\label{t5.2}
Let $E\longrightarrow M$ be a complex vector bundle over a closed
oriented odd-dimensional manifold $M$ endowed with a non-degenerate
bilinear form $b$ and let $\nabla$ be a flat connection on $E$. Let
$\nabla'$ denote the connection dual to $\nabla$ with respect to
$b$. Let $H$ be a closed odd-degree differential form on $M$. Then
\begin{align}\label{5.5}
\alpha\left(\rho_{\rm an}\left(\nabla^{H}\right)\right)=\rho_{\rm
an}\left(\nabla'^{H}\right).
\end{align}
\end{lemma}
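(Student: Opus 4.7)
The plan is to adapt the strategy of Braverman--Kappeler \cite{BK4} to the twisted setting, exploiting the identity $(\mathcal{B}^{H})^{\#}=\mathcal{B}'^{H}$ recorded in (5.2) together with the resulting equalities of spectra, eta invariants and graded determinants noted in (5.3). I fix once and for all a spectral cut $\lambda\ge 0$ and an Agmon angle $\theta$ for $\mathcal{B}^{H}$; by (5.2), the same $\theta$ is simultaneously an Agmon angle for $\mathcal{B}'^{H}$. This yields, for each of the two connections, a direct sum decomposition $\Omega^{\bullet}(M,E)=\Omega^{\bullet}_{[0,\lambda]}\oplus\Omega^{\bullet}_{(\lambda,+\infty)}$ into a finite-dimensional small-eigenvalue subcomplex (whose cohomology is canonically isomorphic to the full twisted cohomology) and its complement. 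The refined analytic torsion factors accordingly as a product of a finite-dimensional refined torsion element, built algebraically from the chirality operator $\Gamma$ and $\nabla^{H}$ (resp.\ $\nabla'^{H}$) restricted to $\Omega^{\bullet}_{[0,\lambda]}$, times the graded zeta-regularized determinant ${\rm Det}_{{\rm gr},\theta}(\mathcal{B}_{{\rm even}})$ of the odd signature operator on the large subspace.

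The next step is to dispatch the large-eigenvalue factor. By (5.3) the graded determinants of $\mathcal{B}^{H}$ and $\mathcal{B}'^{H}$ coincide, and restricting to the common large-eigenvalue part preserves this equality; hence the analytic contributions to the two sides of (5.5) agree identically. It then remains to show that, on the finite-dimensional small-eigenvalue subcomplexes, the isomorphism $\alpha$ sends the refined torsion element of $\nabla^{H}$ to that of $\nabla'^{H}$. Since $\alpha$ is constructed purely from the bilinear pairing $\beta_{g,b}$, and since $\Gamma$ intertwines $\nabla^{H}$ with the $\beta_{g,b}$-adjoint of $\nabla'^{H}$ via (5.2), this reduces to a linear-algebraic compatibility statement about two finite-dimensional $\mathbb{Z}$-graded complexes equipped with a common chirality operator and placed in perfect duality by $\beta_{g,b}$. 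The desired identity on determinant lines then follows by comparing canonical bases along the lines of the algebraic lemmas in \cite{BK4}.

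The main obstacle will be this last finite-dimensional comparison. The powers of $i^{(n+1)/2}$ and the signs $(-1)^{q(q+1)/2}$ hidden in the definition of $\Gamma$ must be tracked carefully, since (5.5) is asserted as an honest equality rather than equality up to a unit. This is precisely the bookkeeping carried out in \cite{BK4} in the untwisted case; the passage to the twisted complex does not affect the underlying algebraic identities because the twist $H$ merely replaces $\nabla$ by $\nabla+H\wedge$ throughout, leaving the $\Gamma$-related sign conventions and the $\beta_{g,b}$-adjunction relations of (5.2) intact. Once those identities are matched, (5.5) follows.
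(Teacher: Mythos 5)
Your proposal is essentially the same approach the paper intends, though the paper itself gives no proof at all: it simply states that the argument is identical to that of Huang's duality theorem \cite[Theorem 5.3]{H} (which is in turn modeled on the Braverman--Kappeler framework you cite), with the one simplifying remark that $\mathcal{B}^{H}$ and $\mathcal{B}'^{H}$ have the \emph{same} spectrum, so unlike the Hermitian-dual case no complex conjugation appears anywhere. Your sketch — common spectral cut and Agmon angle via (\ref{5.2}), matching the large-eigenvalue graded determinants via (\ref{5.3}), and a finite-dimensional comparison of refined torsion elements under $\alpha$ governed by the $\Gamma$-intertwining relations — is the right skeleton. Two small points of caution: first, the phrase ``restricting to the common large-eigenvalue part'' is slightly loose, since $\mathcal{B}^{H}$ and $\mathcal{B}'^{H}$ need not share generalized eigenspaces even though they share eigenvalues; the statement you actually need is equality of the zeta-regularized determinants of the two (possibly different) large-eigenvalue restrictions, which follows from equality of spectra. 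Second, you should explicitly track the $e^{-i\pi(\eta(\nabla^{H})-\mathrm{rank}\,E\cdot\eta_{\rm trivial})}$-type phase factor entering the definition of $\rho_{\rm an}$, not just the graded determinant; this is again handled by $\eta(\mathcal{B}'^{H})=\eta(\mathcal{B}^{H})$ from (\ref{5.3}), but it is worth saying so rather than folding it silently into ``the analytic contributions.''
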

The proof is the same as the proof of \cite[Theorem 5.3]{H}
(actually, it is simple, since $\mathcal{B}^{H}$ and
$\mathcal{B}'^{H}$ have the same spectrum and, hence, there is no
complex conjugation involved) and will be omitted.

For simplicity, we denote ${\rm
det}'(d_{\bar{0}}^{\#}d_{\bar{0}}|_{\Omega^{\bar{0}}_{(a,+\infty)}(M,E)})^{-1}\cdot({\rm
det}'(d_{\bar{1}}^{\#}d_{\bar{1}}|_{\Omega^{\bar{1}}_{(a,+\infty)}(M,E)}))$
by $\tau_{b,\nabla,H,(a,+\infty)}$. Let
$\Delta'^{H}=\left(\nabla'^{H}\right)^{\#}\nabla'^{H}+\nabla'^{H}\left(\nabla'^{H}\right)^{\#}$,
then we have
$$\Delta'^{H}=\Gamma\Delta^{H}\Gamma.$$
\begin{lemma}
The following identity holds,
\begin{align}\label{5.6}
\tau_{b,\nabla,H,(a,+\infty)}=\tau_{b,\nabla',H,(a,+\infty)}.
\end{align}
\end{lemma}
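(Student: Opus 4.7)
The plan is to exploit the two conjugation relations stated in (5.2). Since $\dim M=n$ is odd, the chirality operator $\Gamma$ is parity-reversing, giving a bijection $\Omega^{\bar{0}}(M,E)\leftrightarrow\Omega^{\bar{1}}(M,E)$ (with $\Gamma^{2}=\mathrm{Id}$). The identity $\Delta'^{H}=\Gamma\Delta^{H}\Gamma$ restricted to $\Omega^{\bar{k}}$ then reads $\Delta'^{H}|_{\Omega^{\bar{k}}}=\Gamma\,\Delta^{H}|_{\Omega^{\overline{k+1}}}\,\Gamma$, so $\Gamma$ conjugates $\Delta^{H}|_{\Omega^{\overline{k+1}}}$ to $\Delta'^{H}|_{\Omega^{\bar{k}}}$. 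Consequently $\Gamma$ carries the $(a,+\infty)$ spectral subspace of $\Delta^{H}$ in $\Omega^{\overline{k+1}}$ bijectively onto the $(a,+\infty)$ spectral subspace of $\Delta'^{H}$ in $\Omega^{\bar{k}}$.

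The other relation $(\nabla'^{H})^{\#}=\Gamma\nabla^{H}\Gamma$ rewrites, after separating components, as $(d')^{\#}_{\bar{k}}=\Gamma\,d_{\bar{k}}\,\Gamma$, hence $\mathrm{im}((d')^{\#}_{\bar{k}})=\Gamma\cdot\mathrm{im}(d_{\bar{k}})$. Combined with the spectral-subspace identification above, $\Gamma$ restricts to a bijection from $\mathrm{im}(d_{\bar{k}})\cap\Omega^{\overline{k+1}}_{(a,+\infty)}(M,E)$ onto $\mathrm{im}((d')^{\#}_{\bar{k}})\cap\Omega^{\bar{k}}_{(a,+\infty)}(M,E)$. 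On the source subspace, $\Delta^{H}$ reduces to $d_{\bar{k}}d_{\bar{k}}^{\#}$ (the term $d_{\overline{k+1}}^{\#}d_{\overline{k+1}}$ vanishes on $\mathrm{im}(d_{\bar{k}})$ because $\nabla^{H}$ squares to zero); symmetrically, on the target subspace, $\Delta'^{H}$ reduces to $(d')^{\#}_{\bar{k}}d'_{\bar{k}}$. Thus these two restricted operators have the same spectrum with multiplicities, and hence the same zeta-regularized determinant.

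The last ingredient is the standard spectral symmetry: $d_{\bar{k}}$ commutes with $\Delta^{H}$ and has trivial kernel on the positive-spectrum part of $\mathrm{im}(d_{\bar{k}}^{\#})$, so it restricts to a bijection $\mathrm{im}(d_{\bar{k}}^{\#})\cap\Omega^{\bar{k}}_{(a,+\infty)}(M,E)\to\mathrm{im}(d_{\bar{k}})\cap\Omega^{\overline{k+1}}_{(a,+\infty)}(M,E)$ intertwining $d_{\bar{k}}^{\#}d_{\bar{k}}$ with $d_{\bar{k}}d_{\bar{k}}^{\#}$. Chaining this with the previous step gives $\det'\bigl((d')^{\#}_{\bar{k}}d'_{\bar{k}}|_{\Omega^{\bar{k}}_{(a,+\infty)}(M,E)}\bigr)=\det'\bigl(d_{\bar{k}}^{\#}d_{\bar{k}}|_{\Omega^{\bar{k}}_{(a,+\infty)}(M,E)}\bigr)$ for $k=0,1$. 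Substituting these two equalities into the definitions of $\tau_{b,\nabla,H,(a,+\infty)}$ and $\tau_{b,\nabla',H,(a,+\infty)}$ gives (5.6).

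The main obstacle I expect is simply careful bookkeeping of the parity shift: $\Gamma$ exchanges $\mathrm{im}(d_{\bar{k}})$ with $\mathrm{im}((d')^{\#}_{\bar{k}})$ rather than directly matching $d$- and $d'$-images in the same parity, so one must route the comparison through the $d^{\#}d$--$dd^{\#}$ spectral symmetry on the $\nabla$-side. Once this is tracked correctly, no analytic input is needed beyond the meromorphic-continuation and spectral-decomposition facts already used in Section~3.
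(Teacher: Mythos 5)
Your argument is correct and is essentially the same as the paper's proof. You apply the $d^\#d$--$dd^\#$ spectral symmetry first (on the $\nabla$-side) and then the $\Gamma$-conjugation from (5.2), whereas the paper conjugates by $\Gamma$ first and then uses the isomorphism $\nabla'^H:\Omega^{\bar k}_{(a,+\infty)}\cap\mathrm{im}(\nabla'^H)^{\#}\to\Omega^{\overline{k+1}}_{(a,+\infty)}\cap\mathrm{im}\,\nabla'^H$ on the $\nabla'$-side; these are the same two ingredients in the opposite order, yielding the identical determinant equality for each $k$.
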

\begin{proof}
Applying (\ref{5.2}) and using the fact that
$$\nabla'^{H}:\Omega^{\bar{k}}_{(a,+\infty)}(M,E,H)\cap {\rm im}\left(\nabla'^{H}\right)^{\#}\to
\Omega^{\overline{k+1}}_{(a,+\infty)}(M,E,H)\cap {\rm
im}\nabla'^{H}$$ is an isomorphism, we get
\begin{multline}
\tau_{b,\nabla,H,(a,+\infty)}=\prod_{k=0,1}{\rm
det}'\left(\left(\nabla^{H}\right)^{\#}\nabla^{H}|_{\Omega^{\bar{k}}_{(a,+\infty)}(M,E,H)}\right)^{(-1)^{k+1}}\\
=\prod_{k=0,1}{\rm
det}'\left(\Gamma\nabla'^{H}\left(\nabla'^{H}\right)^{\#}\Gamma|_{\Omega^{\bar{k}}_{(a,+\infty)}(M,E,H)}\right)^{(-1)^{k+1}}\\
=\prod_{k=0,1}{\rm
det}'\left(\nabla'^{H}\left(\nabla'^{H}\right)^{\#}|_{\Omega^{\bar{k}}_{(a,+\infty)}(M,E,H)}\right)^{(-1)^{k}}\\
=\prod_{k=0,1}{\rm
det}'\left(\left(\nabla'^{H}\right)^{\#}\nabla'^{H}|_{\Omega^{\bar{k}}_{(a,+\infty)}(M,E,H)}\right)^{(-1)^{k+1}}=\tau_{b,\nabla',H}.\\
\end{multline}
\end{proof}

Then for any $h\in{\rm det}H^{\bullet}(M,E,H)$, we have
\begin{align}\label{5.8}
\tau_{b,\nabla,H}(h)=\tau_{b,\nabla',H}(\alpha(h)).
\end{align}
Then form (\ref{5.5}) and (\ref{5.8}) we get
\begin{align}\label{5.9}
\tau_{b,\nabla,H}\left(\rho_{\rm
an}\left(\nabla^{H}\right)\right)=\tau_{b,\nabla',H}\left(\rho_{\rm
an}\nabla'^{H}\right).
\end{align}

Let
$$\widetilde{\nabla}=\left(\begin{matrix}\nabla&0\\0&\nabla'\end{matrix}\right)$$
and
$$\widetilde{\nabla}^{H}=\left(\begin{matrix}\nabla^{H}&0\\0&\nabla'^{H}\end{matrix}\right).$$
Then for any $a\geq 0$, we have
$$\tau_{b,\widetilde{\nabla},H,(a,+\infty)}=\tau_{b,\nabla,H,(a,+\infty)}\cdot\tau_{b,\nabla',H,(a,+\infty)}$$
and
$$\tau_{b,\widetilde{\nabla},H}\left(\rho_{\rm an}\left(\widetilde{\nabla}^{H}\right)\right)
=\tau_{b,\nabla,H}\left(\rho_{\rm
an}\left(\nabla^{H}\right)\right)\cdot\tau_{b,\nabla',H}\left(\rho_{\rm
an}\left(\nabla'^{H}\right)\right).$$ Then combining the latter
equality with (\ref{5.9}), we get
$$\tau_{b,\widetilde{\nabla},H}\left(\rho_{\rm an}\left(\widetilde{\nabla}^{H}\right)\right)
=\tau_{b,\nabla,H}\left(\rho_{\rm
an}\left(\nabla^{H}\right)\right)^{2}.$$ Hence, (\ref{5.1}) is
equivalent to the equality
\begin{align}\label{5.10}
\tau_{b,\widetilde{\nabla},H}\left(\rho_{\rm
an}\left(\widetilde{\nabla}^{H}\right)\right) =e^{-4\pi
i\left(\eta\left(\nabla^{H}\right)-{\rm rank}E\cdot\eta_{\rm
trivial}\right)}.
\end{align}
By a slight modification of the deformation argument in
\cite[Section 4.7]{BK4}, where the untwisted case was treated, we
can obtain (\ref{5.10}). Hence, we finish the proof of Theorem
\ref{t5.1}.

\section{On the Cappell-Miller analytic torsion} \setcounter{equation}{0} In this section, we briefly
discuss the extension of the Cappell-Miller analytic torsion to the
twisted de Rham complexes. Let ${\rm dim}M$ be odd.

Using notations above, we have the twisted de Rham complex
$\nabla^{H}: \Omega^{\bar{k}}(M,E)\to \Omega^{\overline{k+1}}(M,E)$
and the chirality operator $\Gamma:\Omega^{\bar{k}}(M,E)\to
\Omega^{\overline{k+1}}(M,E)$, $k=0,1$. Define
$$d_{\bar{k}}^{\flat}=\Gamma d_{\bar{k}}\Gamma:\Omega^{\bar{k}}(M,E)\to \Omega^{\overline{k+1}}(M,E).$$
Then consider the non-self-adjoint Laplacian
$$\Delta_{\bar{k}}^{\flat}=\left(d_{\bar{k}}+d_{\bar{k}}^{\flat}\right)^{2}:\Omega^{\bar{k}}(M,E)\to \Omega^{\bar{k}}(M,E).$$
For any $a\geq 0$, let $\Omega^{\flat,\bar{k}}_{[0,a]}(M,E)$
($\Omega^{\flat,\bar{k}}_{(a,+\infty)}(M,E)$) denote the span in
$\Omega^{\bar{k}}(M,E)$ of the generalized eigensolutions of
$\Delta_{\bar{k}}^{\flat}$ with generalized eigenvalues with
absolute value in $[0,a]$ ($(a,+\infty)$). Then we have the
decomposition of the complex
$$\left(\Omega^{\bullet}(M,E),d\right)=\left(\Omega^{\flat,\bullet}_{[0,a]}(M,E),d\right)
\oplus\left(\Omega^{\flat,\bullet}_{(a,+\infty)}(M,E),d\right).$$
The subcomplex $(\Omega^{\flat,\bullet}_{[0,a]}(M,E),d)$ is a
$\mathbb{Z}_{2}$-graded finite dimensional complex. Then we can
define the torsion element $\rho^{\flat}_{\Gamma_{[0,a]}}\otimes
\rho^{\flat}_{\Gamma_{[0,a]}}\in{\rm
det}H^{\bullet}(\Omega^{\flat,\bullet}_{[0,a]}(M,E),d)^{2}\cong {\rm
det}H^{\bullet}(M,E,H)^{2}$, where $\rho^{\flat}_{\Gamma_{[0,a]}}$
defined by \cite[(2.22)]{H}. On the other hand, for the subcomplex
$(\Omega^{\flat,\bullet}_{(a,+\infty)}(M,E),d)$, the following
zeta-regularized determinant is well defined (cf. (\ref{3.4}))
\begin{align}
{\rm
det}'\left(d_{\bar{k}}^{\flat}d_{\bar{k}}|_{\Omega^{\bar{k}}_{(a,+\infty)}(M,E)}\right):=
\exp\left(-\zeta'\left(0,d_{\bar{k}}^{\flat}d_{\bar{k}}|_{{\rm
im}d_{\bar{k}}^{\flat}\cap\Omega^{\flat,\bar{k}}_{(a,+\infty)}(M,E)}\right)\right).
\end{align}

Consider the square of the graded determinant defined in
\cite[(2.38)]{H}, we find that for $\mathbb{Z}_{2}$-graded finite
dimensional complex $\Omega^{\flat,\bullet}_{(a,c)}(M,E)$, $0\leq
a<c<\infty$,
$${\rm det}'\left(d_{\bar{0}}^{\flat}d_{\bar{0}}|_{\Omega^{\flat,\bar{0}}_{(a,c)}(M,E)}\right)\cdot
{\rm
det}'\left(d_{\bar{1}}^{\flat}d_{\bar{1}}|_{\Omega^{\flat,\bar{1}}_{(a,c)}(M,E)}\right)^{-1}
=\left({\rm Det}_{\rm
gr}\left(\mathcal{B}_{\bar{0}}|_{\Omega^{\flat,\bullet}_{(a,c)}(M,E)}\right)\right)^{2}.$$
Then by \cite[Proposition 2.7]{H}, we easily get
\begin{prop}
The torsion element defined by
\begin{align}\label{6.2}
\rho^{\flat}_{\Gamma_{[0,a]}}\otimes
\rho^{\flat}_{\Gamma_{[0,a]}}\cdot \prod_{k=0,1}\left({\rm
det}'\left(d_{\bar{k}}^{\flat}d_{\bar{k}}|_{\Omega^{\bar{k}}_{(a,+\infty)}(M,E)}\right)\right)^{(-1)^{k}}\in{\rm
det}H^{\bullet}(M,E,H)^{2}
\end{align}
is independent of the choice of
$a\geq 0$.
\end{prop}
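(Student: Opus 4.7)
The plan is to mimic the splitting argument that established the analogous $a$-independence of $\tau_{b,\nabla,H}$ in Section 3. Fix $0\leq a<c<\infty$. The spectral decomposition of $\Delta^{\flat}_{\bar{k}}$ yields a $\Gamma$-invariant direct sum
\begin{equation*}
\left(\Omega^{\flat,\bar{k}}_{[0,c]}(M,E),d\right)=\left(\Omega^{\flat,\bar{k}}_{[0,a]}(M,E),d\right)\oplus\left(\Omega^{\flat,\bar{k}}_{(a,c]}(M,E),d\right),
\end{equation*}
together with the analogous decomposition of $\Omega^{\flat,\bar{k}}_{(a,+\infty)}(M,E)$ into its $(a,c]$ and $(c,+\infty)$ pieces. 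By multiplicativity of the zeta-regularized determinant on this invariant decomposition (exactly as in the proof of the $a$-independence of $\tau_{b,\nabla,H}$),
\begin{equation*}
\det{}'\!\left(d_{\bar{k}}^{\flat}d_{\bar{k}}|_{\Omega^{\flat,\bar{k}}_{(a,+\infty)}(M,E)}\right)=\det{}'\!\left(d_{\bar{k}}^{\flat}d_{\bar{k}}|_{\Omega^{\flat,\bar{k}}_{(a,c]}(M,E)}\right)\cdot\det{}'\!\left(d_{\bar{k}}^{\flat}d_{\bar{k}}|_{\Omega^{\flat,\bar{k}}_{(c,+\infty)}(M,E)}\right).
\end{equation*}

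The remaining task is to compare the finite-dimensional pieces of the torsion element across the cutoff change. Applying \cite[Proposition 2.7]{H} to the finite-dimensional $\mathbb{Z}_2$-graded subcomplex $\Omega^{\flat,\bullet}_{(a,c]}(M,E)$, together with the canonical identification of cohomology ${\rm det}H^{\bullet}(\Omega^{\flat,\bullet}_{[0,c]}(M,E),d)\cong {\rm det}H^{\bullet}(\Omega^{\flat,\bullet}_{[0,a]}(M,E),d)$, gives a relation of the form
\begin{equation*}
\rho^{\flat}_{\Gamma_{[0,c]}}=\rho^{\flat}_{\Gamma_{[0,a]}}\cdot \mathrm{Det}_{\rm gr}\!\left(\mathcal{B}_{\bar{0}}|_{\Omega^{\flat,\bullet}_{(a,c]}(M,E)}\right).
\end{equation*}
Squaring this, invoking the displayed identity immediately preceding the proposition to rewrite
\begin{equation*}
\left(\mathrm{Det}_{\rm gr}\!\left(\mathcal{B}_{\bar{0}}|_{\Omega^{\flat,\bullet}_{(a,c]}(M,E)}\right)\right)^{2}=\det{}'\!\left(d_{\bar{0}}^{\flat}d_{\bar{0}}|_{\Omega^{\flat,\bar{0}}_{(a,c]}(M,E)}\right)\cdot \det{}'\!\left(d_{\bar{1}}^{\flat}d_{\bar{1}}|_{\Omega^{\flat,\bar{1}}_{(a,c]}(M,E)}\right)^{-1},
\end{equation*}
and combining with the zeta-determinant splitting of the previous paragraph, I expect the $(a,c]$ contributions to cancel on both sides, leaving the value of \eqref{6.2} unchanged when the cutoff is raised from $a$ to $c$.

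The routine ingredients are the multiplicativity of $\det{}'$ on spectral direct sums and the standard compatibility of the canonical isomorphism of cohomology determinants with the splitting. The substantive point, and the main place where care is required, is matching the signs and the square in the definition of the torsion element with \cite[Proposition 2.7]{H}: the raw graded determinant $\mathrm{Det}_{\rm gr}$ does not factor directly into the product $\prod_{k}\det{}'(d_{\bar k}^{\flat}d_{\bar k})^{(-1)^{k}}$, only its square does, which is precisely why the element in \eqref{6.2} has been defined as $\rho^{\flat}_{\Gamma_{[0,a]}}\otimes\rho^{\flat}_{\Gamma_{[0,a]}}$ rather than $\rho^{\flat}_{\Gamma_{[0,a]}}$ alone. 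Verifying that this squaring absorbs all phase/sign ambiguities in the cutoff change is the only nontrivial bookkeeping step.
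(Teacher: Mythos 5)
Your proof is correct and follows the same route the paper intends: it fills in the details that the paper compresses into the single sentence ``by \cite[Proposition 2.7]{H}, we easily get'' the claim, namely the spectral splitting, multiplicativity of $\det'$, the cutoff-change relation for $\rho^{\flat}_{\Gamma}$, and the squared-graded-determinant identity displayed just before the proposition. Your closing remark, that only the \emph{square} of $\mathrm{Det}_{\rm gr}$ factors as the product of the two zeta determinants (which is why the torsion element is defined as $\rho^{\flat}_{\Gamma_{[0,a]}}\otimes\rho^{\flat}_{\Gamma_{[0,a]}}$), is exactly the subtlety the paper is relying on.
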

\begin{defn}
The torsion element in ${\rm det}H^{\bullet}(M,E,H)$ defined by
(\ref{6.2}) is called the twisted Cappell-Miller analytic torsion
for the twisted de Rham complex and is denoted by $\tau_{\nabla,H}$.
\end{defn}

Next we study the torsion $\tau_{\nabla,H}$ under metric and flux
deformations. Since the methods are the same as the cases in the
twisted refined analytic torsion \cite{H} and the twisted
Burghelea-Haller analytic torsion above, we only briefly outline the
results.

\begin{thm}{\rm (metric independence)}
Let $M$ be a closed odd dimensional manifold, $E$ be a complex
vector bundle over $M$ with flat connection $\nabla$ and $H$ be a
closed odd-degree differential form on $M$. Then the torsion
$\tau_{\nabla,H}$ is independent of the choice of the Riemannian
metric $g$.
\end{thm}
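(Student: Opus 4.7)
The plan is to mimic the proof of the metric-invariance theorem in Section~4.1 for the twisted Burghelea-Haller torsion, with the symmetric bilinear form $b$ replaced by the chirality operator $\Gamma$, so that the formal adjoint $d^{\#}$ is replaced by $d^{\flat}=\Gamma d\Gamma$. Fix a smooth one-parameter family $g_{u}$ of Riemannian metrics. By Proposition~6.1 the torsion $\tau_{\nabla,H}$ is independent of the spectral cut $a\geq 0$, so I may choose $a$ adaptively on a neighborhood of each parameter value so that $a$ avoids the absolute values of the spectrum of $\Delta^{\flat}_{\bar{k}}$. It then suffices to show that, for such a choice, the derivative with respect to $u$ of the small-eigenvalue factor $\rho^{\flat}_{\Gamma_{[0,a]}}\otimes\rho^{\flat}_{\Gamma_{[0,a]}}$ cancels the derivative of the large-eigenvalue factor $\prod_{k=0,1}({\rm det}'(d_{\bar{k}}^{\flat}d_{\bar{k}}|_{\Omega^{\bar{k}}_{(a,+\infty)}(M,E)}))^{(-1)^{k}}$.

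For the large-eigenvalue factor, $d_{\bar{k}}$ is independent of $u$, so $d_{\bar{k}}^{\flat}=\Gamma_{u}d_{\bar{k}}\Gamma_{u}$ depends on $u$ only through $\Gamma_{u}$. Setting $\alpha=\Gamma_{u}^{-1}(\partial\Gamma_{u}/\partial u)$ yields a commutator expression for $\partial d_{\bar{k}}^{\flat}/\partial u$ analogous to the one appearing in the proof of Lemma~\ref{t4.1}. Introducing
\begin{equation*}
f(s,u)=\sum_{k=0,1}(-1)^{k+1}\int_{0}^{+\infty}t^{s-1}{\rm Tr}\left(e^{-td_{\bar{k}}^{\flat}d_{\bar{k}}}P_{\bar{k}}|_{\Omega^{\flat,\bar{k}}_{(a,+\infty)}(M,E)}\right)dt,
\end{equation*}
applying Duhamel's principle, using the trace property together with $P_{\bar{k}}d_{\bar{k}}^{\flat}=d_{\bar{k}}^{\flat}$, $d_{\bar{k}}P_{\bar{k}}=d_{\bar{k}}$ and $P_{\bar{k}}^{2}=P_{\bar{k}}$, and then integrating by parts, one reduces $(\partial f/\partial u)|_{s=0}$ to a finite sum of traces of $\alpha$ composed with the small-eigenvalue spectral projection $Q_{\bar{k}}$. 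As in the proof of Lemma~\ref{t4.1}, the odd-dimensionality of $M$ is essential: the small-$t$ asymptotic of ${\rm Tr}(\alpha\,e^{-t\Delta_{\bar{k}}^{\flat}})$ has no constant term, so $\int_{0}^{1}t^{s-1}{\rm Tr}(\alpha\,e^{-t\Delta_{\bar{k}}^{\flat}})dt$ is regular at $s=0$ and only the projection contribution survives.

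For the small-eigenvalue factor, the decisive input is the identity recorded just before Proposition~6.1, namely
\begin{equation*}
{\rm det}'(d_{\bar{0}}^{\flat}d_{\bar{0}}|_{\Omega^{\flat,\bar{0}}_{(a,c)}(M,E)})\cdot {\rm det}'(d_{\bar{1}}^{\flat}d_{\bar{1}}|_{\Omega^{\flat,\bar{1}}_{(a,c)}(M,E)})^{-1}=\left({\rm Det}_{\rm gr}(\mathcal{B}_{\bar{0}}|_{\Omega^{\flat,\bullet}_{(a,c)}(M,E)})\right)^{2},
\end{equation*}
which identifies the Cappell-Miller torsion on any finite-dimensional piece with the square of the graded determinant of the odd signature operator. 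Thus the variation of $\rho^{\flat}_{\Gamma_{[0,a]}}\otimes\rho^{\flat}_{\Gamma_{[0,a]}}$ may be transported through the spectral-projection isomorphism $Q_{\bar{k}}:\Omega^{\flat,\bar{k}}_{u,[0,a]}(M,E)\to\Omega^{\flat,\bar{k}}_{w,[0,a]}(M,E)$ in the same manner as in the proof of metric-invariance of the refined analytic torsion in \cite{H} and of Lemma~\ref{t4.2} above, producing a trace in $\alpha$ and $Q_{\bar{k}}$ that exactly cancels the one from the large-eigenvalue factor.

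The main obstacle will be the precise bookkeeping of the sign and the factor-of-two arising from the tensor-square structure of $\rho^{\flat}_{\Gamma_{[0,a]}}\otimes\rho^{\flat}_{\Gamma_{[0,a]}}$, together with the fact that the operators $d_{\bar{k}}^{\flat}d_{\bar{k}}$ and $\mathcal{B}^{2}$ are not self-adjoint, so one must work throughout with generalized eigenspaces and their spectral projections. Once these bookkeeping issues are handled as in \cite{H}, the variational identity gives local metric-independence, and a standard connectedness argument on the space of Riemannian metrics on $M$ propagates this to global metric-independence of $\tau_{\nabla,H}$.
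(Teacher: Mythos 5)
Your proposal is mathematically sound, but the paper takes a considerably shorter route than the one you sketch. Having already recorded the identity
\[
{\rm det}'\bigl(d_{\bar{0}}^{\flat}d_{\bar{0}}|_{\Omega^{\flat,\bar{0}}_{(a,c)}}\bigr)\cdot
{\rm det}'\bigl(d_{\bar{1}}^{\flat}d_{\bar{1}}|_{\Omega^{\flat,\bar{1}}_{(a,c)}}\bigr)^{-1}
=\Bigl({\rm Det}_{\rm gr}\bigl(\mathcal{B}_{\bar{0}}|_{\Omega^{\flat,\bullet}_{(a,c)}}\bigr)\Bigr)^{2},
\]
the paper simply observes that the Cappell--Miller torsion element (\ref{6.2}) is built from exactly the same finite-dimensional torsion element $\rho^{\flat}_{\Gamma_{[0,a]}}$ and the square of the graded zeta-determinant of $\mathcal{B}$ that enter Huang's twisted refined analytic torsion, so the metric-variation formulas already proved in \cite{H} (his Proposition~2.4, (3.18), and (4.14)) apply verbatim and give the independence at once. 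You instead propose to re-derive the variational identities from scratch by transplanting the Duhamel computation of Lemma~\ref{t4.1} and Lemma~\ref{t4.2} with $b$ replaced by $\Gamma$ and $d^{\#}$ by $d^{\flat}=\Gamma d\Gamma$; the commutator identity $\partial_{u}d^{\flat}=-[\alpha,d^{\flat}]$ with $\alpha=\Gamma^{-1}\partial_{u}\Gamma$ does hold because $\Gamma^{2}=1$, the odd-dimensionality kills the constant term in the small-$t$ asymptotics, and the small-eigenvalue contribution transports through the spectral-projection isomorphism to cancel the large-eigenvalue one. That is essentially what Huang proves in the cited places, so your approach is an unfolding of the paper's citation: it costs more work but is self-contained and makes the exact cancellation (as opposed to cancellation only up to a phase, which is what happens for the refined torsion itself) explicit. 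One thing worth stressing if you carry it out in full: the graded-determinant identity above already absorbs the tensor-square factor of two, so the ``bookkeeping of the sign and the factor-of-two'' you flag at the end is not an extra headache but is resolved by that one displayed equation.
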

\begin{proof}
By the definition of $\tau_{\nabla,H}$ and the observation on the
determinants, this theorem follows easily from \cite[Proposition
2.4]{H}, \cite[(3.18)]{H} and \cite[(4.14)]{H}.
\end{proof}

\begin{thm}{\rm (flux representative independence)}
Let $M$ be a closed odd dimensional manifold, $E$ be a complex
vector bundle over $M$ with flat connection $\nabla$. Suppose $H$
and $H'$ are closed differential forms on $M$ of odd degrees
representing the same de Rham cohomology class, and let $B$ be an
even form so that $H'=H-dB$. Then we have $\tau_{\nabla,H'}={\rm
det}(\varepsilon_{B})\tau_{\nabla,H}$.
\end{thm}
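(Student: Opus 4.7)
The plan is to mirror the deformation argument used for the Burghelea--Haller torsion in Lemmas \ref{t4.4} and \ref{t4.5}, adapted from the $\#$-adjoint to the chirality-twisted $\flat$-operator of Section 6. Interpolate the flux by $H_{v}=H-v\, dB$ for $v\in[0,1]$, so that $H_{0}=H$, $H_{1}=H'$, and $\partial_{v}H_{v}=-dB$. Set $\beta=B\wedge\cdot$. The intertwiner $\varepsilon_{vB}=e^{vB}\wedge\cdot$ satisfies $\varepsilon_{vB}\circ\nabla^{H_{0}}=\nabla^{H_{v}}\circ\varepsilon_{vB}$ and induces a canonical identification of the determinant lines $\det H^{\bullet}(M,E,H_{0})\cong\det H^{\bullet}(M,E,H_{v})$. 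The theorem reduces to showing that $\log\tau_{\nabla,H_{v}}-\log{\rm det}(\varepsilon_{vB})$ is constant in $v$; integrating from $0$ to $1$ then yields the claim.

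First, I would compute the variation of the zeta-regularized piece, exactly in parallel with Lemma \ref{t4.4}. Consider
$$f(s,v)=\sum_{k=0,1}(-1)^{k}\int_{0}^{+\infty}t^{s-1}{\rm Tr}\!\left(e^{-td_{\bar{k}}^{\flat}d_{\bar{k}}}P_{\bar{k}}|_{\Omega^{\flat,\bar{k}}_{(a,+\infty)}(M,E)}\right)dt,$$
where $P_{\bar{k}}=d_{\bar{k}}^{\flat}(\Delta_{\overline{k+1}}^{\flat})^{-1}d_{\bar{k}}$. The identities $\partial_{v}d_{\bar{k}}=[\beta,d_{\bar{k}}]$, $\partial_{v}d_{\bar{k}}^{\flat}=-[\Gamma\beta\Gamma,d_{\bar{k}}^{\flat}]$, combined with Duhamel's principle and $P_{\bar{k}}\partial_{v}P_{\bar{k}}P_{\bar{k}}=0$, reduce $\partial_{v}f$ to a total time-derivative of ${\rm Tr}(\beta\, e^{-t\Delta_{\bar{k}}^{\flat}}\Pi_{\bar{k}})$. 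Since $\dim M$ is odd and $\beta$ is a smooth endomorphism-valued form, the short-time heat-trace asymptotics has no constant term, so integrating by parts and evaluating at $s=0$ yields
$$\partial_{v}\log\prod_{k=0,1}{\rm det}{}'\!\left(d_{\bar{k}}^{\flat}d_{\bar{k}}|_{\Omega^{\bar{k}}_{(a,+\infty)}(M,E)}\right)^{(-1)^{k}}=-2\sum_{k=0,1}(-1)^{k}{\rm Tr}(\beta Q_{\bar{k}}).$$

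Second, I would analyze the finite-dimensional torsion element $\rho^{\flat}_{\Gamma_{[0,a]}}\otimes\rho^{\flat}_{\Gamma_{[0,a]}}$. For $|v|$ small, the composition $Q_{\bar{k}}\varepsilon_{vB}:\Omega^{\flat,\bar{k}}_{[0,a]}(M,E,H_{0})\to\Omega^{\flat,\bar{k}}_{[0,a]}(M,E,H_{v})$ is an isomorphism of $\mathbb{Z}_{2}$-graded complexes. Using the explicit definition of $\rho^{\flat}_{\Gamma_{[0,a]}}$ from \cite[(2.22)]{H} together with the square-of-graded-determinant identity \cite[(2.38)]{H}, one computes in the spirit of Lemma \ref{t4.5} and \cite[Lemma 3.7]{MW} that
$$\partial_{v}\log(\rho^{\flat}_{\Gamma_{[0,a]}}\otimes\rho^{\flat}_{\Gamma_{[0,a]}})=2\sum_{k=0,1}(-1)^{k}{\rm Tr}(\beta Q_{\bar{k}})+\partial_{v}\log{\rm det}(\varepsilon_{vB}),$$
the last summand recording the Jacobian of the non-unitary identification of determinant lines. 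Adding the two contributions, the traces cancel and $\partial_{v}\log(\det(\varepsilon_{vB})^{-1}\tau_{\nabla,H_{v}})=0$; integration in $v$ from $0$ to $1$ completes the proof.

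The main obstacle lies in the second step: correctly accounting for the $\det(\varepsilon_{vB})$ anomaly on the finite-dimensional side without over- or under-counting. Because the Cappell--Miller torsion is built from a tensor square of $\rho^{\flat}_{\Gamma_{[0,a]}}$, one might naively expect a factor of $\det(\varepsilon_{B})^{2}$; the square-of-graded-determinant identity \cite[(2.38)]{H} is exactly what organizes the two copies consistently with the single factor appearing in the statement. The odd-dimensionality of $M$ is essential because it kills the constant term in the heat-trace asymptotics, thereby localizing the entire anomaly on the finite-dimensional spectral subspace where it can be matched against the combinatorial determinant of the intertwiner.
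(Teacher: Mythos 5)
Your proposal takes a genuinely different route from the paper. The paper's proof is a reduction: it observes (the displayed identity preceding the proposition in Section~6) that the zeta piece of the Cappell--Miller torsion equals $\bigl({\rm Det}_{\rm gr}(\mathcal{B})\bigr)^2$ on each spectral window, and the finite-dimensional piece is $\rho^{\flat}_{\Gamma_{[0,a]}}\otimes\rho^{\flat}_{\Gamma_{[0,a]}}$; both objects are exactly what appear in Huang's construction of the twisted refined analytic torsion, so the paper simply invokes Huang's Lemmas~4.6 and 4.7 (which control the flux variation of those two ingredients) and is done. You instead redo the Duhamel variational computation from scratch, mirroring Lemmas~\ref{t4.4}--\ref{t4.5}. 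Your route is more self-contained but carries extra burden, and two points need fixing.

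First, a sign: you write $\partial_{v}d_{\bar{k}}^{\flat}=-[\Gamma\beta\Gamma,d_{\bar{k}}^{\flat}]$, importing the minus sign from the $\#$-adjoint case. But $\#$ is an anti-homomorphism ($(ab)^{\#}=b^{\#}a^{\#}$), which is what produces $\partial_{v}d^{\#}=-[\beta^{\#},d^{\#}]$ in Lemma~\ref{t4.4}; conjugation by $\Gamma$ is a homomorphism, so $\partial_{v}d^{\flat}=\Gamma[\beta,d]\Gamma=[\Gamma\beta\Gamma,d^{\flat}]$ with a plus sign. This means the algebra in your Duhamel step is not a verbatim copy of Lemma~\ref{t4.4}: the cancellation pattern changes, and you must use $\Gamma e^{-t\Delta^{\flat}_{\bar k}}\Gamma=e^{-t\Delta^{\flat}_{\overline{k+1}}}$ (which shifts the parity $k\mapsto k+1$ and flips $(-1)^{k}$) to recover a result of the form you claim. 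As written, the step ``reduce $\partial_{v}f$ to a total time-derivative'' is asserted, not established, and the sign error obscures whether the grading bookkeeping actually works out.

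Second, the matching of the $\det(\varepsilon_{vB})$ anomaly. You assert that $\rho^{\flat}_{\Gamma_{[0,a]}}\otimes\rho^{\flat}_{\Gamma_{[0,a]}}$ contributes $\partial_{v}\log\det(\varepsilon_{vB})$ (a single power), and you attribute the absence of the naive $\det(\varepsilon_{B})^{2}$ to ``the square-of-graded-determinant identity organizing the two copies.'' That identity (\cite[(2.38)]{H}) concerns only the zeta/graded-determinant piece, not the element $\rho^{\flat}$; it does not by itself decide the power of $\det(\varepsilon_{B})$ attached to the tensor square of the finite-dimensional torsion element. Since $\tau_{\nabla,H}$ lives in $\det H^{\bullet}(M,E,H)^{2}$, the natural identification under the flux change is $(\det\varepsilon_{B})^{\otimes 2}$, and the claim that the net combinatorial factor is a single $\det(\varepsilon_{B})$ needs a precise justification (for instance, by unwinding the definition of $\rho^{\flat}_{\Gamma_{[0,a]}}$ from \cite[(2.22)]{H} and tracking how $\varepsilon_{B}$ interacts with $\Gamma$), or else by interpreting the statement's $\det(\varepsilon_{B})$ as notation for the induced isomorphism of the squared determinant line. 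As it stands, this is exactly the point the paper sidesteps by citing Huang's Lemma~4.6/4.7, and your proposal leaves it as a gap.
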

\begin{proof}
From the above observation, this follows easily from \cite[Lemma
4.6]{H} and \cite[Lemma 4.7]{H}.
\end{proof}

\bibliographystyle{amsplain}

\end{document}